\newtheorem{thm}{{{Theorem}}}[section] 
\newtheorem{lemma}{Lemma}[section]     
\newtheorem{thmx}{{{Theorem}}}
\newtheorem{lemx}{Lemma}
\newtheorem{conx}{Conjecture}
\DeclareSymbolFont{AMSb}{U}{msa}{m}{n} \DeclareMathAlphabet{\msa}{U}{msa}{m}{n}
\DeclareSymbolFont{AMSb}{U}{msb}{m}{n} \DeclareMathAlphabet{\Bb}{U}{msb}{m}{n}
\DeclareSymbolFont{AMSb}{U}{eus}{m}{n} \DeclareMathAlphabet{\eusm}{U}{eus}{m}{n}
\DeclareSymbolFont{AMSb}{U}{euf}{m}{n} \DeclareMathAlphabet{\eufm}{U}{euf}{m}{n}
\DeclareSymbolFont{AMSb}{U}{eur}{m}{n} \DeclareMathAlphabet{\eurm}{U}{eur}{m}{n}
 \newcommand{\C}{\Bb{C}}
\DeclareMathOperator{\hol}{\rm{Hol}}
\title[More properties of the Ramanujan sequence]{{\bf{More
  Properties of the Ramanujan Sequence}}}
\author{Andrew  Bakan, Stephan Ruscheweyh and  Luis Salinas}%
\begin{document}
\maketitle

\begin{abstract}
The  Ramanujan sequence
$\{\theta_{n}\}_{n \geq 0}$, defined as
\begin{gather*}\theta_{0}= \frac{1}{2} \ , \ \ \
    \theta_{n} = \Big(\ \ \frac{e^{n}}{2} - \sum_{k=0}^{n-1} \frac{n^{k}}{k !} \ \
    \Big) \cdot   \frac{n !}{n^{n}}  \ , \ \ n \geq 1 \ ,
\end{gather*} has been studied on many occasions and in many different
    contexts.  J. Adell and P. Jodra \cite{Ad}(2008) and S. Koumandos \cite{K}(2013) showed, respectively, that the
    sequences $\{\theta_{n}\}_{n \geq 0}$  and  $\{4/135 - n \cdot (\theta_{n}- 1/3 )\}_{n \geq 0}$ are completely
    monotone. In the present paper we establish  that the sequence $\{(n+1) \cdot
(\theta_{n}- 1/3 )\}_{n \geq 0}$  is also completely monotone. Furthermore, we prove that the analytic
    function
$(\theta_{1}- 1/3 )^{-1}\sum_{n=1}^{\infty} (\theta_{n}- 1/3 ) \cdot z^{n} / n^{\alpha}$
is universally starlike for every $\alpha \geq 1$ in the slit domain
    $\C\setminus[1,\infty)$.  This seems to be the first result putting the
    Ramanujan sequence into the context of analytic univalent
    functions and is a step towards a previous stronger
conjecture, proposed by S. Ruscheweyh, L. Salinas and T. Sugawa in
    \cite{RSS}(2009),  namely that
 the function $(\theta_{1}- 1/3 )^{-1}\sum_{n=1}^{\infty} (\theta_{n}- 1/3 ) \cdot z^{n} $ is universally convex.

\end{abstract}
\maketitle
\section{Introduction}

\vspace{0.25cm}
A famous problem raised by Ramanujan in \cite{R2}(1911) states that
 the so-called Ramanujan numbers
$\theta_{n}$, $n \geq 0$, defined as
\begin{gather}\label{1}\theta_{0}= \frac12 \ , \ \ \
    \theta_{n} = \Big(\ \ \frac{e^{n}}{2} - \sum_{k=0}^{n-1} \frac{n^{k}}{k !} \ \
     \Big) \cdot   \frac{n !}{n^{n}}  \ , \ \ n \geq 1 \ ,
\end{gather}
satisfy
\begin{gather}\label{2}
    \theta_{n}  \in  \left[\  \frac{1}{3} \  , \ \frac{1}{2} \ \right] \ .
\end{gather}

\noindent In his first letter to Hardy \cite{R2}(1913)
Ramanujan refined his conjecture \eqref{2} as follows
\begin{gather}\label{3}
   \theta_{n} =  \frac{1}{3} + \frac{4}{135} \cdot \frac{1}{n + k_n} , \ \
   k_n \in  \left[\ \frac{2}{21} \ , \  \frac{8}{45} \ \right] \  , \ \ n \geq 0 \  .
\end{gather}

\noindent
The first proofs of \eqref{2} were published by G.Szeg{\"o} \cite{Sz} (1928) and
 G.N.Watson \cite{W}(1929). A proof of \eqref{3} was obtained in 1995
 by Flajolet et al. \cite{Fl}.

In 2003 S.E.Alm \cite{Alm} showed that the sequence $\{k_{n}\}_{n \geq 0}$ appearing in \eqref{3} is decreasing.
In 2008 J. Adell and P. Jodra \cite{Ad} proved that  there is a probability
distribution function $F$ on $[0,1]$ such that
\begin{gather}\label{4}
   \theta_n - 1/3 = \frac{1}{6} \int_{0}^{1} x^{n} d \, F (x)  \ , \
   n \geq 0 \ ,
\end{gather}
which implies that the sequence $\{\theta_{n}\}_{n \geq 0}$  is completely monotone, i.e.
\begin{gather}\label{5}
\sum_{m=0}^{n}\binom{n}{m} (-1)^{m} \theta_{k+m} \geq 0 \ , \ \ k \geq 0 \ , \ n \geq 0 \ .
\end{gather}

In 2013 S. Koumandos \cite{K}
proved the existence of a strictly positive  function $k$ on $[0, +\infty)$  such that
\begin{gather}\label{5a}
\frac{4}{135} -  n \cdot \left( \theta_{n} - \frac{1}{3}\right)  =
\frac{1}{2} \int_{0}^{\infty} e^{- n x} k (x)  d x \ ,  \ n \geq 0 \ ,
\end{gather}

\noindent
and noted that the complete monotony of the  sequence
$\{4/135 - n \cdot (\theta_{n}- 1/3 )\}_{n \geq 0}$ follows from \eqref{5a}.

%

\smallskip
 We refer the reader to Alzer \cite{Al}(2004), J. Adell and P. Jodra \cite{Ad}(2008)  and S. Koumandos \cite{K}(2013)
  for their surveys  of other previous results on the Ramanujan sequence.

\section{The results}

\subsection{More monotonicity properties}

In this paper we refine the property \eqref{4} as follows.
\begin{thm}\label{th1}
   There is a probability distribution function $G$ on $[0,1]$ such that
   \begin{gather}\label{6}
 \left(n + 1 \right)  \left(\theta_n - 1/3\right)   =     \frac{4}{135} +
   \frac{37}{270} \int_{0}^{1} x^{n} d \, G (x)  \ , \
   n \geq 0 \ .
\end{gather}
As a consequence, the sequence
\begin{gather*}
\Big\{ \ \left(n + 1 \right)  \left(\theta_n - 1/3\right) \ \Big\}_{n \geq 0}
\end{gather*}
 is completely monotone.
\end{thm}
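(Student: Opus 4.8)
The plan is to produce an explicit integral representation of the form \eqref{6} by starting from the Adell–Jodra representation \eqref{4} and performing an Abel-type summation/telescoping at the level of the generating function. First I would record the known closed form of the generating function of $\{\theta_n-1/3\}_{n\ge0}$. From \eqref{4} one has, for $|z|<1$,
\begin{gather*}
\sum_{n\ge0}\bigl(\theta_n-\tfrac13\bigr)z^n=\frac16\int_0^1\frac{dF(x)}{1-xz}.
\end{gather*}
Multiplying by $z$ and differentiating gives $\sum_{n\ge0}(n+1)(\theta_{n}-\tfrac13)z^{n}$ as $\frac{d}{dz}\bigl[z\sum(\theta_n-\tfrac13)z^n\bigr]$, which after the obvious manipulation equals $\frac16\int_0^1\frac{dF(x)}{(1-xz)^2}$. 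So the sequence $(n+1)(\theta_n-\tfrac13)$ is already completely monotone as a Hausdorff moment sequence with respect to the measure $\frac16(1-x)^{-1}$-weighted version of $dF$? — not quite: $(1-xz)^{-2}=\sum(n+1)x^nz^n$, so in fact $(n+1)(\theta_n-\tfrac13)=\frac16\int_0^1 x^n\,(n+1)\,$ is circular. The correct route is to write $(n+1)(\theta_n-\tfrac13)=\frac16\int_0^1 x^n\,dF(x)+\frac16\int_0^1 n x^n\,dF(x)$ and handle the second term by the identity $n x^n=x\frac{d}{dx}x^n$ and integration by parts in $x$, which converts $\int_0^1 nx^n\,dF(x)$ into $[\,x^{n+1}(\cdots)\,]_0^1-\int_0^1 x^n\,d(\text{something})$; the boundary term at $x=1$ produces the constant, and the resulting measure must be shown to be nonnegative.

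Concretely, the key steps are: (i) set $\Phi(z):=\sum_{n\ge0}(n+1)(\theta_n-\tfrac13)z^n=\frac{d}{dz}\bigl[z\cdot\frac16\int_0^1\frac{dF(x)}{1-xz}\bigr]$ and simplify to a single integral $\frac16\int_0^1\frac{dF(x)}{(1-xz)^2}$; (ii) subtract the constant $\tfrac{4}{135}\cdot\frac1{1-z}$, i.e. study $\Phi(z)-\frac{4/135}{1-z}$, and show this equals $\frac{37}{270}\int_0^1\frac{dG(x)}{1-xz}$ for a suitable increasing $G$ with $G(1)-G(0)=1$; (iii) identify the candidate $dG$ by comparing Taylor coefficients — one gets $\frac{37}{270}\int_0^1 x^n dG(x)=(n+1)(\theta_n-\tfrac13)-\tfrac{4}{135}$, so $G$ is determined and the content is positivity and total mass; (iv) to get nonnegativity of $dG$, use Koumandos's representation \eqref{5a}: from \eqref{5a}, $n(\theta_n-\tfrac13)=\tfrac{4}{135}-\tfrac12\int_0^\infty e^{-nx}k(x)\,dx$, so $(n+1)(\theta_n-\tfrac13)-\tfrac{4}{135}=(\theta_n-\tfrac13)-\tfrac12\int_0^\infty e^{-nx}k(x)\,dx$, and by \eqref{4} the first term is $\frac16\int_0^1 x^n dF(x)$; changing variables $x=e^{-t}$ in the $k$-integral puts both pieces on the same footing as Hausdorff moments, so $\frac{37}{270}\,dG$ is the difference of two explicit measures on $[0,1]$, namely $\frac16\,dF$ minus the pushforward of $\frac12 k(t)\,dt$ under $t\mapsto e^{-t}$, divided appropriately.

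The main obstacle is step (iv): proving that the resulting signed measure $\frac16\,dF-\frac12(e^{-t})_*(k(t)\,dt)$ is in fact nonnegative (equivalently, that the sequence $(n+1)(\theta_n-\tfrac13)-\tfrac4{135}$ is itself a nonnegative Hausdorff moment sequence, not merely a difference of two such). This cannot follow from \eqref{4} and \eqref{5a} by soft arguments alone; it requires either (a) a direct hands-on estimate of the density of $F$ against the density $k$ — which in turn needs the explicit forms of $F$ and $k$ coming from the Szegő/Watson–type contour-integral or Laplace-transform analysis of $\theta_n$ — or (b) an independent new integral representation of $(n+1)(\theta_n-\tfrac13)$ derived from scratch via the integral formula $\theta_n-\tfrac13=\tfrac{n!}{n^n}\int$-type expressions (e.g. the Ramanujan/Watson integral $\frac{n!e^n}{2n^n}-\sum_{k<n}\frac{n^k}{k!}=\cdots$), multiplying by $n+1$ and massaging the integrand into $\frac{4}{135}+\frac{37}{270}\int_0^1x^n\,d\mu$ with $\mu\ge0$ visible. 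I expect the paper takes route (b), building the measure $G$ explicitly and then checking $G(0)=0$, $G(1)=1$ (the normalization forcing the constants $\tfrac{4}{135}$ and $\tfrac{37}{270}$, consistent with $n=0$: $(\theta_0-\tfrac13)=\tfrac16=\tfrac4{135}+\tfrac{37}{270}$ ✓) and, crucially, monotonicity of $G$, which is where all the real work — likely a delicate positivity argument for an auxiliary special function — resides.
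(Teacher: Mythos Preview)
Your plan is structurally correct and, once one unwinds the notation, coincides with the paper's route. In the paper's language the Adell--Jodra density is $\tfrac16 dF \leftrightarrow \tfrac12\bigl(-U''(x)-u''(x)\bigr)\,dx$ (Watson's formula \eqref{2.19}) and Koumandos's density is $\tfrac12 k(x)\,dx \leftrightarrow \tfrac12\bigl(U'''(x)+u'''(x)\bigr)\,dx$ (formula \eqref{2.21}); your candidate signed measure $\tfrac16\,dF - (e^{-t})_*\bigl(\tfrac12 k(t)\,dt\bigr)$ is therefore exactly the paper's $\tfrac12\Delta(x)\,dx$ with
\[
\Delta(x)= -U''(x)-u''(x)-U'''(x)-u'''(x),
\]
and your step (iv) is precisely the assertion $\Delta(x)>0$ for $x>0$.

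The gap is that you correctly isolate this as ``where all the real work resides'' but do not prove it; and it is the entire content of the theorem. The paper's proof of $\Delta>0$ is \emph{not} soft: it is the middle inequality of Theorem~\ref{thm4}, namely $-(U''+u'')>U'''+u'''$, and is established as follows. One substitutes the parametrization $x\mapsto \log\rho(t)$ from \eqref{2.8}--\eqref{2.14}, which turns $U^{(k)}+u^{(k)}$ into explicit rational functions of $H(t)=t/(1-e^{-t})$ and $h(t)=t/(e^t-1)$. Clearing denominators reduces $\Delta>0$ to positivity of an explicit exponential polynomial $S_2(t)=\sum_{k=0}^{8}P_k(t)e^{kt}$ with integer polynomial coefficients. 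The paper then invokes a general algorithm (Theorem~\ref{thm5}): iteratively differentiate $n_k+1$ times and divide by $e^t$, checking at each stage that the first few Taylor coefficients at $0$ are nonnegative; finitely many such checks (here 39 rational numbers, all computed exactly) force every Taylor coefficient of $S_2$ to be nonnegative. This computer-algebra verification is the substance you are missing, and there is no softer route known --- in particular, the positivity of $dF$ and of $k$ separately gives no leverage on the sign of their difference.

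So: right destination, right map, but the mountain in the middle is still unclimbed.
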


We also show in Section~\ref{profth1} that Theorem \ref{thm4} below yields easily \eqref{4} and the validity of  \eqref{5a}  written in the following equivalent form
 \begin{gather}\label{7}\frac{4}{135} -  n \cdot \left( \theta_{n} - \frac{1}{3}\right)
  = \frac{4}{135} \int_{0}^{1} t^{ n }d D (t) \ , \ n = 0, 1, 2, ... \  ,
\end{gather}

\noindent
where $D$ is a continuous probability distribution function  on $[0,1]$.

\subsection{The Ramanujan sequence and univalent functions}

Let $\Lambda$ denote the slit domain $\C\setminus[1,\infty)$ and
$\hol(\Lambda)$ the set of analytic functions in $\Lambda$. We write
$f\in\hol_1(\Lambda)$ if $f\in\hol(\Lambda)$ satisfies $f(0)=f'(0)-1=0$.
The following definition has been introduced in \cite[Def.1.3, 1.4,
pp. 290--291]{RSS}.

\begin{definition}
  A function $f\in\hol_{1}(\Lambda)$ is called
  universally starlike if it maps every circular domain
  $\Omega\subset\Lambda$ with $0\in\Omega$ conformally onto a domain
  starlike with respect to the origin. It is called universally convex
  if it maps every circular domain $\Omega\subset\Lambda$ conformally
  onto a convex domain.
\end{definition}

Note that in this definition circular domains are meant to be open
 disks or open half-planes in $\C$. It is an immediate consequence of
 the definition that
 \begin{equation}
   \label{eq:x1}
   f \mbox{ universally convex} \Rightarrow f  \mbox{ universally starlike} \ .
 \end{equation}

In \cite[p.294]{RSS} the following conjecture has been proposed.
\begin{conx}[(S. Ruscheweyh,  L. Salinas,  and T. Sugawa, 2009 {\cite{RSS}})]
The function
\begin{gather}\label{10}
  \sigma (z) := \frac{1}{\theta_{1}- 1/3} \cdot \sum_{n=1}^{\infty} (\theta_{n}- 1/3 ) \cdot z^{n}
\end{gather}
is universally convex.
\end{conx}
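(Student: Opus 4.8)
The plan is to attack the conjecture through the Alexander-type duality that underlies the entire theory of universally convex functions in \cite{RSS}: a function $f\in\hol_1(\Lambda)$ is universally convex if and only if $zf'(z)$ is universally starlike. Applying this to $\sigma$ and differentiating the series \eqref{10} term by term gives
\begin{gather*}
z\sigma'(z)=\frac{1}{\theta_{1}-1/3}\sum_{n=1}^{\infty}n\,(\theta_{n}-1/3)\,z^{n},
\end{gather*}
which already belongs to $\hol_1(\Lambda)$ since its coefficient of $z$ equals $1$. Thus the conjecture is \emph{equivalent} to the universal starlikeness of $z\sigma'$, that is, to the ``$\alpha=-1$'' member of the family $\sigma_{\alpha}(z)=(\theta_{1}-1/3)^{-1}\sum_{n\geq1}(\theta_{n}-1/3)z^{n}/n^{\alpha}$ whose universal starlikeness is established in the present paper only for $\alpha\geq1$. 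The whole task is therefore to move the admissible exponent down by two units, from $\alpha=1$ to the endpoint $\alpha=-1$.

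The route I would then follow is the moment reformulation of universal convexity. In the slit domain one has the characterization that $f\in\hol_1(\Lambda)$ is universally convex if and only if
\begin{gather*}
\frac{zf''(z)}{f'(z)}=\int_{0}^{1}\frac{tz}{1-tz}\,d\mu(t)
\end{gather*}
for some positive measure $\mu$ on $[0,1]$. Expanding the kernel as $tz/(1-tz)=\sum_{k\geq1}t^{k}z^{k}$ shows that this holds precisely when the Taylor coefficients $\varphi_{k}$ of $\varphi(z):=zf''(z)/f'(z)=\sum_{k\geq1}\varphi_{k}z^{k}$ satisfy $\varphi_{k}=\int_{0}^{1}t^{k}\,d\mu(t)$, that is, when the sequence $(\varphi_{k})_{k\geq1}$ is a Hausdorff moment sequence, equivalently completely monotone. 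Hence my concrete goal is to prove that the coefficients of $z\sigma''(z)/\sigma'(z)$ form a completely monotone sequence.

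To verify this I would feed in the linear integral representations already available, namely the Adell--Jodra formula \eqref{4}, Theorem~\ref{th1}, and Koumandos' identity \eqref{7}. From \eqref{4} one gets $\sigma'(z)=\int_{0}^{1}(1-xz)^{-2}\,d\tilde\mu(x)$, together with an analogous expression for $z\sigma''(z)$, where $\tilde\mu$ is the probability measure $d\tilde\mu(x)=x\,dF(x)/\bigl(6(\theta_{1}-1/3)\bigr)$. With $\sigma'$ and $z\sigma''$ controlled in this way, one forms the quotient $\varphi=z\sigma''/\sigma'$ and tests the Hausdorff condition by showing that all iterated forward differences satisfy $(-1)^{j}\Delta^{j}\varphi_{k}\geq0$ (equivalently, that the associated Hankel determinants are nonnegative, so that the representing $\mu$ is a genuine positive measure).

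The hard part will be precisely this last step, and it is the reason the statement is still only a conjecture. Because $\varphi=z\sigma''/\sigma'$ is a \emph{quotient} of power series, its coefficients $\varphi_{k}$ are nonlinear --- rational --- functions of the $\theta_{n}$, so the linear moment representations \eqref{4}, \eqref{7} and Theorem~\ref{th1}, which suffice for the universal starlikeness of $\sigma_{\alpha}$ with $\alpha\geq1$, do not transfer to $\varphi$. Equivalently, lowering the smoothing exponent from $\alpha=1$ to $\alpha=-1$ passes through the borderline where mere positivity of the measures $F$, $G$, $D$ no longer controls the sign pattern of the differences; one needs sharp quantitative information about these measures --- their densities, supports, and the decay rate of their moments --- strong enough to force complete monotonicity of the nonlinearly defined sequence $(\varphi_{k})_{k\geq1}$. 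Producing such estimates, or alternatively exhibiting the positive measure $\mu$ directly from a closed form for $\sigma'$, is the central obstacle that remains to be overcome.
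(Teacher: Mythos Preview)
The statement under consideration is labelled \emph{Conjecture A} in the paper, and the paper does \emph{not} prove it. Quite the contrary: the authors explicitly present Theorem~\ref{th3} (universal starlikeness of $\sigma_\alpha$ for $\alpha\geq1$) as ``a first step towards the still open decision concerning Conjecture~A''. There is therefore no proof in the paper against which to compare your attempt.

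Your proposal is not a proof either, and you say so yourself in the final paragraph. What you have written is a correct and useful \emph{reformulation} of the problem --- via the Alexander-type duality $f$ universally convex $\Longleftrightarrow$ $zf'$ universally starlike, you reduce Conjecture~A to the universal starlikeness of $\sigma_{-1}=z\sigma'$, and then to the complete monotonicity of the Taylor coefficients of $z\sigma''/\sigma'$ --- but you then acknowledge that the decisive step, namely controlling the sign pattern of these nonlinearly defined coefficients, ``remains to be overcome''. That is exactly the state of affairs the paper leaves the conjecture in: the linear moment representations \eqref{4}, \eqref{6}, \eqref{7} are strong enough to reach $\alpha\geq1$ in Theorem~\ref{th3}, but not to push down to $\alpha=-1$.

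In short: there is no gap to name because there is no proof on either side. Your write-up is an honest description of why the conjecture is hard, and it matches the paper's own assessment of the difficulty, but it should not be presented as a proof proposal.
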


In a recent paper \cite{BRS} the present authors established the
  following general result.

\begin{thmx}\label{poly}
 For $f(z)=\sum_{n=1}^{\infty}a_n z^n\in\hol(\Lambda)$
  let $f_{\alpha}(z):=\sum_{n=1}^{\infty}n^{-\alpha} a_n z^n$.
  Then we  have:

 \noindent
 1. If
  $f$ is universally convex then the functions $f_{\alpha},\ \alpha\geq1$, are also
  universally convex.

  \noindent
2.  If
  $f$ is universally starlike then the functions $f_{\alpha},\ \alpha\geq0$, are also
  universally starlike.
   \end{thmx}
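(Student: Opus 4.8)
The plan is to reduce everything to the Hadamard product and then run it through the integral representations of the universal classes. Writing $f(z)=\sum_{n\ge1}a_{n}z^{n}$, one has $f_{\alpha}=f*\mathrm{Li}_{\alpha}$, where $\mathrm{Li}_{\alpha}(z):=\sum_{n\ge1}z^{n}/n^{\alpha}$ and $*$ denotes the Hadamard (coefficientwise) product on $\hol(\Lambda)$; since $\mathrm{Li}_{0}(z)=z/(1-z)$ is the unit for $*$, the case $\alpha=0$ is trivial. The one elementary computation needed is the Mellin-type formula, valid for $\alpha>0$,
\[
  \mathrm{Li}_{\alpha}(z)=\frac{1}{\Gamma(\alpha)}\int_{0}^{1}\Bigl(\log\tfrac{1}{t}\Bigr)^{\alpha-1}\frac{z}{1-tz}\,dt ,
\]
which follows from $\int_{0}^{1}t^{\,n-1}\bigl(\log(1/t)\bigr)^{\alpha-1}\,dt=\Gamma(\alpha)\,n^{-\alpha}$. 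Taking $n=1$ shows that $d\nu_{\alpha}(t):=\Gamma(\alpha)^{-1}(\log(1/t))^{\alpha-1}\,dt$ is a probability measure on $[0,1]$, so for every $\alpha\ge0$ (with $\nu_{0}:=\delta_{1}$) this displays $\mathrm{Li}_{\alpha}$ in the ``Stieltjes form'' $\int_{[0,1]}\frac{z}{1-tz}\,d\mu(t)$.

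For part 2, I would invoke the representation theorem of \cite{RSS}: $f\in\hol_{1}(\Lambda)$ is universally starlike if and only if $f(z)=\int_{[0,1]}\frac{z}{1-tz}\,d\mu(t)$ for a probability measure $\mu$ on $[0,1]$ (equivalently, the coefficients of $f$ form a Hausdorff moment sequence). The assertion then follows from the multiplicativity of the kernels under convolution, $\frac{z}{1-tz}*\frac{z}{1-sz}=\frac{z}{1-tsz}$ with $ts\in[0,1]$: if $f$ is universally starlike, combining its Stieltjes form with the displayed formula for $\mathrm{Li}_{\alpha}$ and applying Fubini gives
\[
  f_{\alpha}=f*\mathrm{Li}_{\alpha}=\int_{[0,1]}\!\int_{[0,1]}\frac{z}{1-(ts)z}\,d\mu(t)\,d\nu_{\alpha}(s)=\int_{[0,1]}\frac{z}{1-rz}\,d\lambda(r),
\]
where $\lambda$ is the push-forward of $\mu\otimes\nu_{\alpha}$ under $(t,s)\mapsto ts$, again a probability measure on $[0,1]$; hence $f_{\alpha}$ is universally starlike for all $\alpha\ge0$. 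For part 1 the same scheme works once one has two structural facts from \cite{RSS} and \cite{BRS}: a ``universal P\'olya--Schoenberg'' theorem stating that the Hadamard product of two universally convex functions is universally convex, and the fact that $\mathrm{Li}_{\alpha}$ is itself universally convex exactly for $\alpha\ge1$; applying the first to $g=\mathrm{Li}_{\alpha}$ then yields the claim. The universal convexity of $\mathrm{Li}_{\alpha}$ for $\alpha\ge1$ can be approached by reduction to $\mathrm{Li}_{1}(z)=-\log(1-z)$ -- which is directly seen to be universally convex, since $1-z$ carries a circular subdomain of $\Lambda$ onto a disk or half-plane disjoint from $(-\infty,0]$ and $\log$ of such a region is convex -- together with $\mathrm{Li}_{\alpha}=\mathrm{Li}_{1}*\mathrm{Li}_{\alpha-1}$, the Pólya--Schoenberg theorem (for integer steps), and, for the fractional part, the identity $\mathrm{Li}_{\alpha}'(z)=\int_{[0,1]}(1-tz)^{-1}\,d\nu_{\alpha-1}(t)$, which places $\mathrm{Li}_{\alpha}$ in a class known to be universally convex. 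The value $\alpha=1$ is forced because this last representation requires $\{n^{1-\alpha}\}_{n\ge1}$ to be a Hausdorff moment sequence, which fails (the sequence is unbounded) for $\alpha<1$; and one checks directly that $\mathrm{Li}_{\alpha}=(z/(1-z))_{\alpha}$ fails to carry some disk of $\Lambda$ to a convex set for $\alpha\in(0,1)$, so the stated ranges are sharp.

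The main obstacle is the structural input behind part 1: the universal P\'olya--Schoenberg theorem and the precise description of the universally convex functions that renders the threshold $\alpha=1$ transparent. This is where the analytic weight lies, and it is exactly what \cite{RSS} and \cite{BRS} are designed to supply; with those in hand, both parts reduce to bookkeeping with probability measures on $[0,1]$ under multiplicative convolution, starting from the single Mellin formula for the polylogarithm.
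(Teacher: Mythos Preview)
This statement (Theorem~A) is not proved in the present paper; it is quoted from \cite{BRS}, so there is no in-paper proof to compare against. Your overall architecture --- writing $f_{\alpha}=f*\mathrm{Li}_{\alpha}$ and then invoking convolution-closure results together with structural properties of the polylogarithm --- is indeed the strategy of \cite{BRS}, and your outline for part~1 (a universal P\'olya--Schoenberg theorem plus the universal convexity of $\mathrm{Li}_{\alpha}$ for $\alpha\ge1$) is the correct reduction.

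There is, however, a genuine error in your treatment of part~2. You assert that $f\in\hol_{1}(\Lambda)$ is universally starlike if and only if
\[
f(z)=\int_{[0,1]}\frac{z}{1-tz}\,d\mu(t)
\]
for some probability measure $\mu$, i.e.\ that the coefficient sequence of $f$ is a Hausdorff moment sequence. This is false. The characterization actually given in \cite{RSS}, and quoted in the present paper as Theorem~B, is the \emph{multiplicative} one:
\[
\frac{f(z)}{z}=\exp\int_{[0,1]}\log\frac{1}{1-tz}\,d\mu(t)\,.
\]
The two classes differ. For a concrete witness, take $\mu=\tfrac{1}{2}\delta_{0}+\tfrac{1}{2}\delta_{1}$ in your linear formula, giving
\[
f(z)=\tfrac{1}{2}\,z+\tfrac{1}{2}\,\frac{z}{1-z}=\frac{z(2-z)}{2(1-z)}\,,\qquad \frac{zf'(z)}{f(z)}=\frac{(1-z)^{2}+1}{(1-z)(2-z)}\,.
\]
For $z=e^{i\theta}$ with $\theta>0$ small one finds $\re\!\big(zf'(z)/f(z)\big)\to-\tfrac{1}{2}$, so $f$ is not even starlike in the unit disk, hence certainly not universally starlike. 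Consequently your Fubini/push-forward computation, as written, only establishes that the Hausdorff-moment class is closed under Hadamard convolution --- which is true but is not the assertion of part~2.

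What salvages the argument is that the Hausdorff-moment class is the right \emph{multiplier} class: in the framework of \cite{RSS,RS}, convolution with any $g(z)=\int_{[0,1]}\frac{z}{1-tz}\,d\nu(t)$ preserves universal starlikeness of $f$. Since your Mellin formula exhibits $\mathrm{Li}_{\alpha}$ in exactly this form for every $\alpha\ge0$, part~2 does follow once you invoke that multiplier theorem instead of the (incorrect) representation of universally starlike functions. So the fix is not to your computation with $\mathrm{Li}_{\alpha}$ but to the structural input you feed it into.
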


We shall prove

\begin{thm}\label{th3}
   The functions
    \begin{gather*}
  \sigma_\alpha(z):= \frac{1}{\theta_{1}- 1/3} \sum_{n=1}^{\infty}
(\theta_n - 1/3) \frac{z^{n}}{n^{\alpha}} \ ,
    \end{gather*}
    are universally starlike for every $\alpha \geq 1$.
\end{thm}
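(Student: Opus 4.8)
By Theorem~\ref{poly}, part 2, it suffices to prove that $\sigma_1 = \sigma_\alpha$ at $\alpha = 1$ is universally starlike; indeed, once $\sigma_1$ is shown to be universally starlike, the functions $(\sigma_1)_{\beta}$ are universally starlike for all $\beta \geq 0$, and since $\sigma_\alpha = (\sigma_1)_{\alpha-1}$ with $\alpha - 1 \geq 0$, the full statement follows. So the plan is to reduce everything to the single function
\[
  \sigma_1(z) = \frac{1}{\theta_1 - 1/3}\sum_{n=1}^{\infty}\frac{\theta_n - 1/3}{n}\, z^{n}.
\]

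The main tool I would use is the integral representation from Theorem~\ref{th1}: there is a probability distribution function $G$ on $[0,1]$ with
\[
  (n+1)(\theta_n - 1/3) = \frac{4}{135} + \frac{37}{270}\int_0^1 x^{n}\, dG(x), \qquad n \geq 0.
\]
Dividing by $n(n+1)$ and summing against $z^n$, and using $\frac{1}{n(n+1)} = \frac1n - \frac1{n+1}$, expresses $\sum_{n\geq 1}\frac{\theta_n - 1/3}{n} z^n$ as a constant multiple of $-\log(1-z)$ plus an integral over $x\in[0,1]$ of an explicit combination of $-\log(1-xz)$ and $\frac{1}{z}\log(1-xz)$-type terms against $dG(x)$. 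The key structural fact is that each ``building block'' $z\mapsto -\frac{1}{x}\log(1-xz)$ (for $x\in(0,1]$) and $z \mapsto z$ (the $x=0$ limit) is universally starlike in $\Lambda$ — this is exactly the kind of elementary universally starlike function established in \cite{RSS}. Then I would invoke the closure property from \cite{RSS}: a convex combination (here, an average with respect to the probability measure $dG$, together with the nonnegative-weight $-\log(1-z)$ term) of universally starlike functions, suitably normalized so that $f(0) = f'(0)-1 = 0$, is again universally starlike. The positivity of all the weights is guaranteed precisely because $G$ is a \emph{probability} distribution function and the two explicit constants $4/135$ and $37/270$ are positive; this is where Theorem~\ref{th1} does the essential work.

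The step I expect to be the main obstacle is verifying that the termwise decomposition genuinely assembles into a \emph{nonnegatively weighted} superposition of universally starlike functions with the correct normalization at $0$ — the bookkeeping with the $\frac1n - \frac1{n+1}$ splitting produces both a $\log(1-xz)$ piece and a $\frac1z\bigl(\text{something}\bigr)$ piece, and one must check that the latter does not spoil analyticity at the origin or introduce a sign that breaks the convex-combination structure. Concretely, one must show that after combining, every coefficient in the relevant integral representation of $\sigma_1$ (or of $z\sigma_1'(z)/\sigma_1(z)$, if one prefers to argue via the analytic characterization of universal starlikeness in terms of $\re\bigl(z f'(z)/f(z)\bigr)$ behaving correctly on circular subdomains) has the right sign. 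An alternative route that sidesteps some of this, which I would keep in reserve, is to use the Herglotz-type / prestarlike characterization of universal starlikeness from \cite{RSS}: $f \in \hol_1(\Lambda)$ is universally starlike iff $z f'(z)/f(z)$ has nonnegative real part composed appropriately, equivalently iff a certain associated measure is nonnegative; feeding the representation \eqref{6} into that criterion should again reduce to the positivity of $G$ and of the two explicit rational constants. Either way, once the nonnegativity is in hand the conclusion is immediate, and the passage from $\sigma_1$ to all $\sigma_\alpha$, $\alpha \geq 1$, is a direct application of Theorem~\ref{poly}.
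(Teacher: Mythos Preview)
Your reduction to $\alpha=1$ via Theorem~\ref{poly} is correct and matches the paper. The gap is in the next step.

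The closure property you invoke --- that a nonnegatively weighted superposition of universally starlike functions is again universally starlike --- is false, and no such statement appears in \cite{RSS}. Universal starlikeness is characterized (Theorem~\ref{xthbr}, i.e.\ Corollary~1.1 of \cite{RSS}) by an \emph{exponential} representation
\[
\frac{f(z)}{z}=\exp\Bigl(\int_{[0,1]}\log\frac{1}{1-tz}\,d\mu(t)\Bigr),
\]
not an additive one; the class is therefore not convex (already ordinary starlike functions in the disk fail this). So writing $\sigma_1$ as an integral of building blocks $\phi(xz)$ against $dG(x)$ says nothing directly about universal starlikeness, whatever the signs of the weights. Your ``alternative route'' via $z\sigma_1'/\sigma_1$ is closer to what is actually needed, but the ratio for a sum is not expressible through the ratios for the summands, so the moment representation \eqref{6} by itself does not give the required Cauchy-integral form of $z\sigma_1'/\sigma_1$.

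The paper does not route the argument through Theorem~\ref{th1} at all. It works with the density: from \eqref{2.22} one has
\[
(\theta_1-\tfrac13)\,\frac{\sigma_1(z)}{z}=\frac12\int_1^\infty\frac{g(\log t)/t}{t-z}\,dt,\qquad g(x)=\frac{4}{3}-U'(x)-u'(x),
\]
and an auxiliary criterion (Lemma~\ref{lemma1}, proved via Lemma~A of \cite{RSS}) shows that such an integral has the exponential form of Theorem~\ref{xthbr} whenever $g(0)=0$, $g'\geq 0$, and $(g')^2-g''g\geq 0$. These sign conditions come straight from Theorem~\ref{thm4}: $g'=-(U''+u'')>0$ and $g''=-(U'''+u''')<0$, so $(g')^2-g''g>0$ is automatic. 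Thus the essential input is the pointwise differential inequalities for Watson's functions $U,u$ (specifically the outer two inequalities in \eqref{eq:x2}), not the complete-monotonicity statement \eqref{6}; in fact Theorems~\ref{th1} and~\ref{th3} are parallel consequences of Theorem~\ref{thm4} rather than one implying the other.
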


In view of Theorem \ref{poly} and \eqref{eq:x1}  it is clear that Theorem \ref{th3} represents a necessary
condition for Conjecture A to be valid, and therefore is a first step
towards the still open decision concerning Conjecture A.

\section{Watson’s approach}\label{s2}

\vspace{0.25cm}
In this section we follow Watson's reasonings from \cite{W}.
On the positive half-line there exist two functions $u$ and $U $ satisfying the following relations
\begin{gather}\label{2.1}
    u (x) \cdot e^{{1 - u (x) }} = e^{-x} \ , \ \ \
    U (x) \cdot e^{{1 - U (x) }} = e^{-x} \ ,  \ \  0 \leq u (x) \leq 1\leq U (x) \  , \ \
    x \geq 0 \ .
\end{gather}
The function  $ U $ is strictly increasing on $[0, +\infty)$ with $ U (0)=1$ and $U(x)\to +\infty$ as $x \to +\infty$, whereas $ u $ is strictly decreasing on $[0, +\infty)$ with $ u(0)=1$ and $\lim_{x \to +\infty} u(x)=0$. Furthermore, $ u $ and $ U $ satisfy  the differential equations (see \cite[p.295]{W})
\begin{gather}\label{2.2}
   U^{\, \prime } (x)  = \frac{U  (x)}{U (x) - 1} \ , \ \
     u^{\, \prime } (x)  = - \frac{u (x)}{1 - u (x) } \ , \ x > 0 \   ,
\end{gather}
which  imply that
 \begin{gather}\label{2.3}
\begin{array}{ll} U^{\, \prime \prime  } (x)  = - \dfrac{U(x)}{\left(U(x) - 1\right)^{3}}\ , &
u^{\, \prime \prime  } (x)  = \dfrac{u(x)}{\left(1 - u(x)\right)^{3}} \ ,\\[0.5cm]
U^{\, \prime \prime \prime   } (x)  = \dfrac{U (x) (2 U(x) + 1)}{\left(U(x) - 1\right)^{5}}\ , &
u^{\, \prime \prime \prime   } (x)  = - \dfrac{u (x) (2 u(x) + 1)}{\left(1 - u (x)\right)^{5}} \ .
\end{array}
\end{gather}
\noindent
If we put
\begin{gather}\label{2.4}
    w (x) :=  u (\log x) \ ,  \ \  W (x) :=  U (\log x) \ ,  \ x \geq 1 \ ,
\end{gather}
\noindent
then
\begin{gather}\label{2.5} \frac{e^{{ w (x) }}}{ e \cdot w (x)}
    = x \ , \ \ \frac{e^{{ W (x) }}}{ e \cdot W (x)}
    = x \ ,
     \ \  0 \leq w (x) \leq 1\leq W (x) \  , \ \
    x \geq 1 \ ,
\end{gather}
and  therefore
\begin{gather}\label{2.6}
    w \left( \frac{e^{x}}{e \cdot x}\right) = x \ ,  \ \ x \in [0,1] \ ,
\ \
   W \left( \frac{e^{x}}{e \cdot x}\right) = x \ , \ \  x \geq 1 \ .
   \end{gather}
Since, on the positive semiaxis, the function $x / (e^{x} -1)$ decreases from $1$ to zero
while  $x / (1 - e^{- x})$ increases from $1$ to $+\infty $ the
equations  \eqref{2.6} can be written as
\begin{gather}\label{2.7}
     w \left( \dfrac{ \exp\left(\ \dfrac{x}{e^{x} -1 }\right)}{e \cdot \dfrac{x}{e^{x} -1 }}\right) = \frac{x}{e^{x} -1 } \ , \ \
W \left( \frac{\exp\left( \ \dfrac{x}{1 - e^{- x} }\right)}{e \cdot \dfrac{x}{1 - e^{- x} }}\right) = \frac{x}{1 - e^{- x} }\ , \ \ x \geq 0 \ .
\end{gather}

Elementary calculations show that the even function
\begin{equation}\label{2.8}
  \rho (x) :=  \frac{  \exp\left(\ \dfrac{x}{e^{x} -1 }\right)   }{e \cdot \dfrac{x}{e^{x} -1 }} =
\dfrac{ \exp\left( \ \dfrac{x}{1 - e^{- x} }\right) }{e \cdot \dfrac{x}{1 - e^{- x} }} \ , \ \ x \in \Bb{R} \ ,
\end{equation}
\noindent
satisfies $\rho(0)=1$ and
\begin{gather}
\label{2.9}
\frac{\rho^{\, \prime } (x)}{\rho (x)} =
\dfrac{e^{x} (e^{x}\!-\! 1\! -\! x) (e^{-x}\!-\! 1\! +\! x) }{x
  (e^{x}\! -\! 1)^{2}  }\ > 0,\quad x>0.
\end{gather}
\noindent Therefore \eqref{2.8}  and \eqref{2.4} imply
\begin{equation}\label{2.11}
U \left( \log  \rho (x)\right) = \frac{x}{1 - e^{- x} }=:H(x)
  \ , \ \  u \left( \log \rho (x)\right) = \frac{x}{e^{x} -1 }=:h(x)
\ , \ \ x \in \Bb{R} \ .
\end{equation}
and by virtue of \eqref{2.3} we obtain for arbitrary $x > 0$ the representations

\begin{align}
\label{2.12}
U^{\, \prime } \left( {{\log \rho (x)}}\right)
+ u^{\, \prime } \left( {{\log \rho (x)}}\right) & =
\frac{H(x)}{H(x)-1} +\frac{h(x)}{h(x)-1}  \ , \\[0.3cm]
\label{2.13}
\ U^{\,\prime \prime } \left( {{\log \rho (x)}}\right)
+ u^{\,\prime \prime } \left( {{\log \rho (x)}}\right)
& =\frac{H(x)}{(1-H(x))^3}+\frac{h(x)}{(1-h(x))^3}\\[0.3cm]
\label{2.14}
U^{\,\prime \prime \prime  } \left( {{\log \rho (x)}}\right)
+ u^{\,\prime \prime \prime  } \left( {{\log \rho (x)}}\right)
 & =  H(x)\frac{1+2H(x)}{(H(x)-1)^5} + h(x)\frac{1+2h(x)}{(h(x)-1)^5}
\  \ ,
\end{align}
\vspace{3mm}

The following key result concerning these quantities will be established in Section~\ref{sect5}.
\begin{thm}
  \label{thm4}
For $x>0$ we have
\begin{equation}
  \label{eq:x2}
  U^{\, \prime }(x)+u^{\, \prime }(x)>-(U^{\, \prime \prime }(x)+u^{\, \prime \prime }(x))>
  U^{\, \prime \prime \prime  }(x)+u^{\, \prime \prime \prime  }(x)>0.
\end{equation}
\end{thm}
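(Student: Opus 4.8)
The plan is to reduce \eqref{eq:x2} to two explicit inequalities in one real variable and to prove those by power-series arguments. First I would pass from $t>0$ to $x>0$ via the substitution $t=\log\rho(x)$: by \eqref{2.9} the even function $\rho$ satisfies $\rho(0)=1$, $\rho'>0$ on $(0,\infty)$ and $\rho(x)\to\infty$, so $x\mapsto\log\rho(x)$ is an increasing bijection of $(0,\infty)$ onto $(0,\infty)$. Hence, by \eqref{2.11}--\eqref{2.14}, the chain \eqref{eq:x2} holds for all $t>0$ if and only if, for every $x>0$,
\[
P_1:=\frac{H}{H-1}+\frac{h}{h-1}\ >\ P_2:=\frac{H}{(H-1)^{3}}-\frac{h}{(1-h)^{3}}\ >\ P_3:=\frac{H(1+2H)}{(H-1)^{5}}+\frac{h(1+2h)}{(h-1)^{5}}\ >\ 0,
\]
where $H=H(x)=x/(1-e^{-x})>1$ and $h=h(x)=x/(e^{x}-1)\in(0,1)$.

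Write $v:=H-1=\dfrac{x-1+e^{-x}}{1-e^{-x}}>0$ and $s:=1-h=\dfrac{e^{x}-1-x}{e^{x}-1}\in(0,1)$; then $v+s=x$ and $v-s=H+h-2=x\coth(x/2)-2>0$. Elementary algebra gives
\[
P_1=2+\frac1v-\frac1s,\qquad P_2=\frac{v+1}{v^{3}}-\frac{1-s}{s^{3}},\qquad P_3=\frac{(2v+3)(v+1)}{v^{5}}-\frac{(3-2s)(1-s)}{s^{5}},
\]
and hence
\[
P_1-P_2=\frac{(v+1)^{2}(v-1)}{v^{3}}+\frac{(1-s)^{2}(1+s)}{s^{3}},\qquad
P_2-P_3=\frac{(v+1)^{2}(v-3)}{v^{5}}+\frac{(1-s)^{2}(s+3)}{s^{5}}.
\]
Returning to the variable $t$ and putting $\Phi:=U'+u'$, so that $\Phi'=U''+u''=-P_2$ and $\Phi''=U'''+u'''=P_3$, the chain \eqref{eq:x2} reads $\Phi>-\Phi'>\Phi''>0$. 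Here the first inequality follows from the second: if $-\Phi'>\Phi''$ on $(0,\infty)$, then $(\Phi+\Phi')'=\Phi'+\Phi''<0$, so $\Phi+\Phi'$ is strictly decreasing there; since $U'\to1$ and $u',U'',u''\to0$ as $t\to\infty$, we get $\Phi+\Phi'\to1$, whence $\Phi+\Phi'>1>0$, i.e. $\Phi>-\Phi'$. It therefore remains only to prove $P_3>0$ and $P_2-P_3>0$ for all $x>0$.

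For these two inequalities I would insert the closed forms $v+1=x/(1-e^{-x})$, $v=(x-1+e^{-x})/(1-e^{-x})$, $1-s=x/(e^{x}-1)$, $s=(e^{x}-1-x)/(e^{x}-1)$ and clear all denominators, each of which is positive on $(0,\infty)$. Then $P_3>0$ and $P_2-P_3>0$ become the assertions that two concrete entire functions are strictly positive on $(0,\infty)$; each such function $\Psi$ is a polynomial in $x$, $e^{x}$ and $e^{-x}$, and I would prove its positivity by showing that its Maclaurin coefficients are all nonnegative and not all zero (so that $\Psi(x)=\sum_n c_n x^n>0$ for $x>0$). It is convenient here to set $p:=e^{x}-1-x>0$ and $q:=e^{-x}-1+x>0$, to use $p+q=2(\cosh x-1)$ and $p-q=2(\sinh x-x)$, and to exploit the factorizations displayed above.

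The place where genuine work is required is this last step. Because $H-1$ and $1-h$ agree to first order at $x=0$ --- one has $H-1=\frac x2+\frac{x^{2}}{12}+O(x^{4})$ and $1-h=\frac x2-\frac{x^{2}}{12}+O(x^{4})$ --- the leading Maclaurin coefficients of $\Psi$ cancel identically, so a whole initial block of them vanishes and the positivity of the remaining coefficients is not visible term by term. One must therefore organise the expansion so that the cancellations are built in --- for instance by grouping the $e^{x}$- and $e^{-x}$-contributions, or by expressing $\Psi$ through $\cosh x-1$ and $\sinh x-x$ and then majorising the resulting series --- or else derive a recursion for the $c_n$ and prove $c_n\ge0$ by induction. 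I expect this bookkeeping, and not any conceptual difficulty, to be the crux of the proof of Theorem~\ref{thm4}.
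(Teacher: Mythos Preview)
Your algebraic reductions are correct. The substitution $t=\log\rho(x)$, the formulae for $P_1,P_2,P_3$ in terms of $v=H-1$ and $s=1-h$, and the factorizations of $P_1-P_2$ and $P_2-P_3$ all check out. Your observation that the first inequality in \eqref{eq:x2} follows from the second by monotonicity --- since $-\Phi'>\Phi''$ forces $(\Phi+\Phi')'<0$ while $\Phi+\Phi'\to1$ at infinity --- is a genuine simplification: the paper proves all three inequalities independently (its Cases~1,~2,~3), whereas you reduce to two.

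From that point on your target coincides with the paper's: clear denominators and show that the resulting exponential polynomial $\Psi(x)=\sum_k P_k(x)e^{kx}$ has all Maclaurin coefficients nonnegative. You correctly anticipate the difficulty --- a long initial block of coefficients vanishes because $v$ and $s$ agree to first order at $0$ --- but you stop exactly where the paper's main technical contribution begins. Your proposed remedies (group the $e^{\pm x}$ terms, rewrite via $\cosh x-1$ and $\sinh x-x$, or set up a recursion and induct) are plausible-sounding but not concrete; in particular, the linear recursion satisfied by the coefficients of an exponential polynomial has both signs among its coefficients, so a naive induction will not close.

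What the paper supplies here is a specific finite algorithm (its Theorem~\ref{thm5}): for $f_0=\Psi$ of order $m$ and multi-degree $\{n_0,\dots,n_m\}$, set $f_{k+1}(x)=f_k^{(n_k+1)}(x)e^{-x}$; then nonnegativity of the finitely many numbers $f_k^{(s)}(0)$ for $0\le s\le n_k$, $0\le k\le m$, forces \emph{all} Maclaurin coefficients of $\Psi$ to be nonnegative. This reduces each case to checking a short list of rational numbers, which the paper does by exact computer algebra. Your proposal would be complete once you supply either this device or an equivalent one; as written, the ``bookkeeping'' you defer is precisely the substance of the proof.
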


The last inequality in \eqref{eq:x2} is known (see Koumandos \cite[Lemma 2, p.452]{K}). However, in order
to make the present paper more self-contained. a new proof, based on the new algorithm disclosed in
~\ref{sect51}, is  given in Subsection~\ref{thep}.


\section{\texorpdfstring{Proof of Theorem \ref{th1} }{Proof of Theorem 2.1}}\label{profth1}

\vspace{0.25cm}
G.N.Watson \cite[p.297 ]{W} obtained
\begin{equation}\label{2.19}
   \theta_{n} - \frac{1}{3} = \frac{1}{2} \int_{0}^{\infty} e^{- n x}
\left(- U^{\,\prime \prime } (x) - u^{\,\prime \prime } (x)\right) d x \ ,
 \ n \geq 0 \ ,
\end{equation}
\noindent
and \cite[p.300]{W}
\begin{equation}\label{2.20}
    U^{\,\prime \prime } (0) + u^{\,\prime \prime } (0) = - \frac{8}{135} \ .
\end{equation}

Integration by parts applied to \eqref{2.19} using \eqref{2.20} gives
the basic relations used for this proof:

\begin{align}\label{2.21}
 n  \left( \theta_{n} - \frac{1}{3}\right)   & =  \frac{4}{135} -
\frac{1}{2} \int_{0}^{\infty} e^{- n x}
\left(U^{\,\prime \prime \prime  } (x) + u^{\,\prime \prime \prime  } (x)\right) d x \ ,
 \ n \geq 0 \ , \\[0.3cm]  \label{2.22}
 \frac{\theta_{n} - \frac{1}{3}}{n} & =
\frac{1}{2} \int_{0}^{\infty} e^{- n x}
\left( \frac{4}{3} -  U^{\, \prime }\left(x\right) - u^{\, \prime} \left( x\right) \right) d x  \ ,
 \ n \geq 1 \ ,\\[3mm]
\label{2.24}
  \left(n + 1\right)  \left(\theta_{n} - \frac{1}{3} \right) -
    \frac{4}{135} &=
    \frac{1}{2} \int\limits_{0}^{\infty} e^{- n x} \Delta (x) d x \ , \
    n \geq 0 \ ,
\end{align}
where
\begin{equation}
  \label{eq:x3}
 \Delta(x) :=  -U^{\,\prime \prime } (x) -  u^{\,\prime \prime } (x) - U^{\,\prime \prime \prime  } (x) - u^{\,\prime \prime \prime  } (x) \ , \  x \geq 0 \ .
\end{equation}

We mention in passing that \eqref{2.22} leads immediately to the
representation \eqref{4} for the Ramanujan sequence  given by  J. Adell and P. Jodra \cite[(5), p.3]{Ad}.
It follows from \eqref{2.8}, \eqref{2.9} and Theorem \ref{thm4} that
\begin{gather}\label{2.17}
\lim\limits_{x\to +\infty}\left(u^{\, \prime }(x) +  U^{\, \prime }(x)\right) = 1 \  , \
 u^{\, \prime } \left( 0\right) +  U^{\, \prime } \left(0\right) = {4}/{3} \  , \ \ \ \
  u^{\, \prime } \left( x\right) +  U^{\, \prime } \left(x\right) > 0 \ , \  x > 0 \  .
\end{gather}
Theorem \ref{thm4} (see also Watson \cite[p.298]{W} and Alzer \cite[p.641]{Al}) implies $U^{\, \prime \prime  }(x)+u^{\, \prime \prime  }(x)<0,\ x>0,$ so that the function
\begin{gather}\label{2.23}
    G_{0} (x) := 3 \left[\frac{4}{3} -  U^{\, \prime }\left(x\right) - u^{\, \prime } \left( x\right)\right] \ , \ \ x \geq 0 \ ,
\end{gather}
increases from $0$ to $1$ on the positive half-line and in view of \eqref{2.19},
 \begin{gather*}
  \theta_{n} - \frac{1}{3} = \frac{1}{6} \int_{0}^{\infty} e^{- n x}
 d G_{0} (x) =  \frac{1}{6}\int_{0}^{1} t^{n}  d\left[1- G_{0} \left(\log \frac{1}{t}\right)\right]  \ ,
 \ n \geq 0 \ .
 \end{gather*}
 Since
 $ \theta_{n} >  0$, $n \geq 0$, and
 \begin{gather*}
 \sum_{m=0}^{n} C^{m}_{n} (-1)^{m} \theta_{k+m} = \sum_{m=0}^{n} C^{m}_{n} (-1)^{m}\left( \theta_{k+m} - \frac{1}{3} \right)=
\frac{1}{6}\int_{0}^{\infty}  e^{- k x }\left(1 - e^{ -  x}\right)^{n}  d G_{0} (x) > 0 \ ,
 \end{gather*}
for all $k \geq 0$ and $ n \geq 1$, we obtain the complete monotony of  $\{\theta_{n}\}_{n \geq 0}$ and the validity
of (\ref{4}) for $F (x):= 1- G_{0} \left(\log {1}/{x}\right)$, $0 < x \leq 1$, $F (0):=0$.

\vspace{0.15cm}
Furthermore, by writing \eqref{2.21} in the form
 \begin{align*}
1-\frac{135\,n}{4}\left(\theta_n-\frac{1}{3}\right)& =\frac{135}{8} \int_{0}^{\infty} e^{- n x}
\left(U^{\,\prime \prime \prime  } (x) + u^{\,\prime \prime \prime  } (x)\right) d x =  \int_{0}^{\infty} e^{- n x}  d G_{1}(x) \ ,
 \ n \geq 0 \ ,
\end{align*}
where
\begin{align*}
  G_{1}(x) & = 1 +  \frac{135}{8} \left( U^{\,\prime \prime } (x) + u^{\,\prime \prime } (x) \right)\ , \ x \geq 0 \ ,
 \end{align*}

\noindent
we obtain  the validity of \eqref{7} for $D (x):= 1- G_{1} (\log (1/x))$
  because $U^{\prime \prime \prime  }+u^{\prime \prime \prime  }$ is non-negative for all $x \geq 0$
by Theorem \ref{thm4} (see also Koumandos \cite[p.452]{K}).

\vspace{0.15cm}
Using \eqref{2.24} and again Theorem
 \ref{thm4} to see that $\Delta(x)$ is non-negative we conclude that
 \begin{gather*}
  \left(n + 1\right)  \left(\theta_{n} - \frac{1}{3} \right)  -     \frac{4}{135} =
   \frac{37}{270}\int\nolimits_{[0,1]} x^n
d G (x)   \ , \
    n \geq 0 \ ,
\end{gather*}
where
\begin{gather*}
 \frac{37}{45} G (e^{-x}) :=   3 \left[ U^{\, \prime }\left(x\right) + u^{\, \prime } \left( x\right) +
 U^{\,\prime \prime } (x) +  u^{\,\prime \prime } (x)-1\right] \ , \   0 \leq  x < +\infty\ ,
 \end{gather*}

\noindent
  $G (0) := G (0+0) =   0$, $G (1-0)  = G (1) = 1$ and $({37}/{45})e^{-x} G^{\, \prime } (e^{-x}) = 3 \Delta(x) > 0$ for all $x > 0$. Therefore $G$ is a probability distribution function  on $[0,1]$ and the sequence in question turns out to be completely monotone.
  The proof of Theorem \ref{th1} is now complete.

\section{\texorpdfstring{Proof of Theorem \ref{th3}}{Proof of Theorem 2.3}}

\vspace{0.25cm}
Note that \eqref{2.19} and \eqref{2.22} imply the following
integral representations for the functions dealt with in   \eqref{10}
and Theorem \ref{th3} (for $\alpha=1$),
 \begin{align}\label{2.26}
  \left(\theta_{1}- 1/3\right)  \sigma (z) & =
  \frac{1}{2} \int_{1}^{\infty}\frac{z}{t - z }
\frac{- U^{\,\prime \prime } \left( \log  t\right) - u^{\,\prime \prime } \left( \log t \right)}{t} d t \ ,  \ \ z \in \Lambda \ \ ,
  \\[0.4cm]
  \label{2.27}\left(\theta_{1}- 1/3\right)  \sigma_{1} (z) &  =
 \frac{1}{2} \int_{1}^{\infty}\frac{z}{t - z }
\frac{ \left(4/3\right) -  U^{\, \prime }\left( \log t \right) - u^{\, \prime } \left( \log t \right)}{t}  d t \ ,  \ \ z \in \Lambda \ \ ,
 \end{align}
and recall, using Theorem \ref{poly}, that we need to prove Theorem
\ref{th3} only for the case $\alpha=1$. The necessary and sufficient
condition for $\sigma_1$ to have that property is given in the
following result.

\begin{thmx}[(Corollary 1.1 {\cite{RSS}})]\label{xthbr}
  Let $f \in \hol_{1} (\Lambda)$. Then $f$ is universally starlike
if and only if there exists a probability measure
  $\mu$ on $[0, 1]$ such that
  \begin{gather}\label{8}
    \dfrac{f(z)}{z} =
       \exp{\ {
    {{\int\nolimits_{[ 0 , 1\, ]}}} \,  \log \frac{1}{1 - t z} \ d \mu (t)
    }} \ , \ \ z \in \Lambda \ .
  \end{gather}
\end{thmx}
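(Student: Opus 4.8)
The plan is to pass from the exponential representation \eqref{8} to an equivalent Cauchy--Stieltjes representation of the logarithmic derivative, and then to recognise the latter as a Pick (Nevanlinna) function whose spectrum is confined to $[1,\infty)$. Since $f\in\hol_1(\Lambda)$ forces $f(z)/z\to 1$ as $z\to 0$, differentiating $\log(f(z)/z)$ in \eqref{8} and using $\int_{[0,1]}d\mu=1$ shows that \eqref{8} is equivalent to
\begin{equation*}
 p(z):=\frac{zf'(z)}{f(z)}=\int_{[0,1]}\frac{d\mu(t)}{1-tz}\ ,\qquad z\in\Lambda\ ,
\end{equation*}
with $\mu$ a probability measure; conversely, integrating such a $p$ back from $0$ along paths in the simply connected domain $\Lambda$ and exponentiating returns \eqref{8}. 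Hence it suffices to characterise universal starlikeness of $f$ by this representation of $p$.

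For sufficiency I would argue with the building blocks $k_t(z):=z/(1-tz)$, $t\in[0,1]$, which satisfy $zk_t'(z)/k_t(z)=1/(1-tz)$, i.e. the above representation with $\mu=\delta_t$. Each $k_t$ is a M\"obius transformation whose only pole $1/t\in[1,\infty)$ lies outside every circular domain $\Omega\subset\Lambda$; therefore $k_t$ maps such an $\Omega$ conformally onto a disk or a half-plane, a convex set, so each $k_t$ is universally convex and in particular universally starlike. The decisive point is that the geometric criterion for $f(\Omega)$ to be starlike about $0$ --- namely $\im\bigl(\tau\,f'(z)/f(z)\bigr)\ge 0$ as $z$ runs over $\partial\Omega$ with counterclockwise unit tangent $\tau$ --- is additive in $\log f$. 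Writing the representation as $\log(f/z)=\int_{[0,1]}\log(k_t/z)\,d\mu(t)$ yields $f'/f=\int_{[0,1]}k_t'/k_t\,d\mu(t)$, so $\im(\tau f'/f)=\int_{[0,1]}\im(\tau k_t'/k_t)\,d\mu(t)\ge 0$ on $\partial\Omega$ because every integrand is nonnegative there. The resulting monotonicity of $\arg f$ along $\partial\Omega$ then gives both the univalence of $f$ on $\Omega$ and the starlikeness of $f(\Omega)$, for every admissible $\Omega$.

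Necessity is the main obstacle. Assuming $f$ universally starlike, I must manufacture $\mu$, i.e. realise $p$ as a Cauchy--Stieltjes transform of a positive measure on $[0,1]$. The admissible half-planes $\Omega\subset\Lambda$ containing $0$ form a conjugate-symmetric family whose boundary lines cross the real axis in $(0,1]$; testing the boundary inequality $\im(\tau f'/f)\ge0$ on the member having a prescribed point $z_0\in\Lambda\setminus\R$ on its boundary, and identifying $\tau$ with the direction of the boundary line through $z_0$, should deliver $\im p(z_0)\ge0$ for $\im z_0>0$ and, from the conjugate family, $\im p(z_0)\le 0$ for $\im z_0<0$. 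By continuity across the gap this makes $p$ real on $(-\infty,1)$, so $p$ is a Pick function, analytic on $\Lambda$ and real on $(-\infty,1)$; its Nevanlinna representing measure is then supported on $[1,\infty)$. The normalisation $p(0)=1$ fixes the additive constant, and the boundedness of $p$ at infinity (forced by the image being starlike, hence $\arg f$ of bounded variation) removes the linear term, giving $p(z)=1+z\int_{[1,\infty)}d\tilde\mu(u)/(u-z)$ with $\tilde\mu\ge0$. Finally the substitution $t=1/u$ converts this into $p(z)=\int_{[0,1]}d\mu(t)/(1-tz)$, the mass at $t=0$ accounting for any atom of $\tilde\mu$ at infinity, and $\int_{[0,1]}d\mu=p(0)=1$ makes $\mu$ a probability measure; integrating back recovers \eqref{8}. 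I expect the bulk of the work to lie in the limiting/geometric argument that turns the family of half-plane inequalities into the two sign conditions on $\im p$, and in justifying that no linear or constant Nevanlinna term survives.
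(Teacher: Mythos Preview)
The paper does not prove this statement at all: Theorem~B is quoted as Corollary~1.1 of \cite{RSS} and used as a black box (it is invoked in Section~\ref{s4} and underlies Lemma~A). There is therefore no proof in the present paper to compare your attempt against.

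As for your sketch itself: the reduction of \eqref{8} to the Cauchy--Stieltjes form $p(z)=zf'(z)/f(z)=\int_{[0,1]}(1-tz)^{-1}\,d\mu(t)$ is correct and is precisely the form in which the result is exploited downstream (cf.\ Lemma~A and the proof of Lemma~\ref{lemma1}). The sufficiency direction via the M\"obius atoms $k_t(z)=z/(1-tz)$ and additivity of the boundary--tangent criterion is essentially sound, modulo an approximation argument for circular domains whose closures touch $[1,\infty)$.

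The necessity direction, however, has real gaps beyond the one you flag. First, for a fixed $z_0$ with $\im z_0>0$ the tangent directions $\tau$ coming from admissible half-planes (those contained in $\Lambda$ and containing $0$) sweep out only a proper arc of the unit circle, and the direction $z_0/|z_0|$ that would deliver $\im p(z_0)\ge 0$ sits at the boundary of that arc (the degenerate case where the bounding line passes through the origin, so $0\notin\Omega$); you need a careful limiting argument to recover it, and to get the sign right. Second, your elimination of the linear term in the Nevanlinna representation by appealing to ``$\arg f$ of bounded variation'' is not justified: universal starlikeness gives monotonicity of $\arg f$ along each individual $\partial\Omega$, but no uniform bound on $p$ near infinity. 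In \cite{RSS} these issues are handled through the theory of universally prestarlike functions and convolution structure rather than by a direct geometric argument, so if you pursue your route you should expect nontrivial additional work at exactly these two points.
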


\subsection{Auxiliary results}\label{s3}

\vspace{0.15cm}
The following lemma is from  \cite[Theorem 1.10, p.294]{RSS}.
\begin{lemx}
   Let $\varphi, \psi :  (0, 1) \to [0, +\infty )$ be two integrable
   functions on $[0,1]$  satisfying
\begin{gather}\label{lem01} \int_{0}^{1} \varphi (t) \ d t = \int_{0}^{1} \psi (t) \ d t > 0  \ , \ \
    \begin{vmatrix}
      \varphi (x_{2}) & \psi (x_{2}) \\
 \varphi (x_{1}) & \psi (x_{1}) \\
    \end{vmatrix} \geq 0 \ , \ 0 < x_{1} \leq  x_{2} < 1 \ .
\end{gather}
Then there exists a probability measure $\mu$ on $[0,1]$ such that
\begin{gather}\label{lem02}
 \int\nolimits_{[ 0 , 1\, ]} \frac{ d \mu (t)}{1 - t z} \ = \int\nolimits_{0}^{1} \dfrac{\varphi (t) }{1-t z} d t \Big/ \int\nolimits_{0}^{1} \dfrac{\psi (t)}{1-t z}  \ d t \ ,  \ \ z \in \Lambda \ .
\end{gather}
\end{lemx}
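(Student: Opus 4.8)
The plan is to prove that the quotient on the right of \eqref{lem02} is the Stieltjes transform $\int_{[0,1]}(1-tz)^{-1}\,d\mu(t)$ of a probability measure $\mu$ on $[0,1]$, by checking the analytic properties that characterise such transforms and then reading $\mu$ off from a Herglotz representation. Put $P(z):=\int_{0}^{1}\varphi(t)(1-tz)^{-1}dt$ and $Q(z):=\int_{0}^{1}\psi(t)(1-tz)^{-1}dt$. For fixed $z\in\Lambda$ the segment $\{1-tz:0\le t\le1\}$ avoids the origin, so $t\mapsto(1-tz)^{-1}$ is bounded on $[0,1]$ and $P,Q\in\hol(\Lambda)$. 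Moreover $Q$ is zero-free on $\Lambda$: for $\im z>0$ one has $\im Q(z)=\int_{0}^{1}\psi(t)\,t\,\im z\,|1-tz|^{-2}dt>0$ (since $\psi\ge0$, $\int_{0}^{1}\psi>0$), the sign reverses for $\im z<0$, and $Q(x)=\int_{0}^{1}\psi(t)(1-tx)^{-1}dt>0$ for real $x<1$. Hence $R:=P/Q\in\hol(\Lambda)$, $R(0)=\int_{0}^{1}\varphi\big/\int_{0}^{1}\psi=1$, and $R>0$ on $(-\infty,1)$.

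The key step is to show that $R$ maps $\C_{+}$ into $\overline{\C_{+}}$, and this is exactly where the determinant condition in \eqref{lem01} is used. Since $\im R=\im(P\overline{Q})\,|Q|^{-2}$, it suffices to prove $\im\bigl(P(z)\overline{Q(z)}\bigr)\ge0$ for $\im z>0$. Expanding into a double integral over $(s,t)\in(0,1)^{2}$ and symmetrising in $s$ and $t$ gives
\[
2\,\im\bigl(P(z)\overline{Q(z)}\bigr)=\int_{0}^{1}\!\!\int_{0}^{1}\bigl(\varphi(s)\psi(t)-\varphi(t)\psi(s)\bigr)\,\im\frac{1}{(1-sz)(1-t\bar z)}\,ds\,dt ,
\]
and a direct computation of $(1-sz)(1-t\bar z)$ yields
\[
\im\frac{1}{(1-sz)(1-t\bar z)}=\frac{(s-t)\,\im z}{\,\bigl|(1-sz)(1-t\bar z)\bigr|^{2}} ,
\]
which has the sign of $s-t$ when $\im z>0$. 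Meanwhile \eqref{lem01} says precisely that $\varphi(s)\psi(t)-\varphi(t)\psi(s)\ge0$ for $s\ge t$, so (being antisymmetric in $s,t$) it too has the sign of $s-t$. Therefore the integrand is pointwise nonnegative and $\im(P\overline{Q})\ge0$. Note that this argument invokes \eqref{lem01} directly, with no division by $\psi$, so possible zeros of $\psi$ are harmless, and the normalisation $\int_{0}^{1}\varphi=\int_{0}^{1}\psi$ enters only through $R(0)=1$.

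Finally, the three facts just established — $R\in\hol(\Lambda)$, $\im R\ge0$ on $\C_{+}$, and $R$ real and positive on $(-\infty,1)$ — are exactly what is needed to place $R$ among the Stieltjes transforms of positive measures on $[0,1]$. Concretely, the Nevanlinna--Herglotz representation on $\C_{+}$ together with Schwarz reflection across $(-\infty,1)$ gives $R(z)=a+bz+\int_{[1,\infty)}\bigl((s-z)^{-1}-s(1+s^{2})^{-1}\bigr)d\tau(s)$ with $a\in\R$, $b\ge0$, $\tau\ge0$, the support of $\tau$ lying in $[1,\infty)$ because $R$ is holomorphic and real on $(-\infty,1)$. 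Positivity of $R$ there (with $R'(x)\ge b$) kills the linear term, $b=0$, makes $L:=\lim_{x\to-\infty}R(x)$ a finite number in $[0,\infty)$, and — using $s(1+s^{2})^{-1}\ge(2s)^{-1}$ on $[1,\infty)$ — forces $\int_{[1,\infty)}s^{-1}d\tau(s)<\infty$. The change of variables $t=1/s$ then turns $\tfrac{R(z)-R(0)}{z}=\int_{[1,\infty)}\tfrac{d\tau(s)}{s(s-z)}$ into $R(z)=R(0)+\int_{(0,1]}\bigl((1-tz)^{-1}-1\bigr)d\nu(t)=\int_{[0,1]}(1-tz)^{-1}d\mu(t)$, where $\nu$ is a finite positive measure on $(0,1]$ and $\mu:=L\,\delta_{0}+\nu$ (one checks $R(0)-\nu((0,1])=L\ge0$); since $\mu([0,1])=R(0)=1$, $\mu$ is a probability measure on $[0,1]$, which is \eqref{lem02}.

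I expect the inequality $\im(P\overline{Q})\ge0$ to be the only genuine obstacle: the surrounding arguments are routine manipulations of the Herglotz/Stieltjes representation, but that inequality is the point at which the total-positivity (monotone-ratio) hypothesis on $(\varphi,\psi)$ has to be converted into an analytic statement, and it is the symmetrisation identity together with the explicit sign of $\im\tfrac{1}{(1-sz)(1-t\bar z)}$ that effects this conversion.
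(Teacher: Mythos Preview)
The paper does not prove Lemma~A: it is quoted verbatim from \cite[Theorem~1.10]{RSS} and used as a black box, so there is no in-paper proof to compare against. Your argument, however, is a correct and self-contained proof of the statement.

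The key step---showing $\im\bigl(P(z)\overline{Q(z)}\bigr)\ge0$ on $\C_{+}$ by symmetrising the double integral and reading off the sign of $\im\dfrac{1}{(1-sz)(1-t\bar z)}=\dfrac{(s-t)\im z}{|(1-sz)(1-t\bar z)|^{2}}$---is exactly the right way to convert the determinant hypothesis into an analytic statement, and it is carried out cleanly. The Herglotz--Stieltjes reduction is a little compressed but sound: once $b=0$ (forced by $R>0$ on $(-\infty,1)$ together with $R'(x)\ge b$), monotonicity of $R$ gives the finite limit $L\ge0$, and monotone convergence in $R(0)-R(x)=\int_{[1,\infty)}\frac{-x}{s(s-x)}\,d\tau(s)$ yields $\int s^{-1}\,d\tau(s)=R(0)-L<\infty$, after which the substitution $t=1/s$ and the identification $\mu=L\delta_0+\nu$ go through as you describe. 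One small point worth making explicit is that $P(x)>0$ on $(-\infty,1)$ (needed for $R>0$ there) follows from $\varphi\ge0$, $\int_0^1\varphi>0$ by the same argument you gave for $Q$.
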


Lemma A allows us to prove the next statement.
\begin{lemma}\label{lemma1}
   Let $g:  (0, +\infty ) \to (0, +\infty )$ be twice continuously
   differentiable on $(0,\infty)$ and assume
   \begin{gather}\label{3.1}
         \begin{array}{ll}
 (a) \ \lim_{x \downarrow 0 } g (x) = 0  \ ,  &
(b) \ \int_{0}^{\infty} e^{-x} g (x) d x = 1 \ ,
 \\[0.2cm]
   (c)\ g^{\, \prime } (  x ) \geq 0\ , \ x > 0 \ ,        & (d) \   g^{\, \prime } (  x )^{2} - g^{\, \prime \prime} (  x )  g (  x ) \geq 0 \ , \ x > 0 \ .
      \end{array}
      \end{gather}
    Then there exists a probability measure $\mu$ on $[0,1]$ such that
 \begin{gather}\label{3.2}
   \int_{1}^{\infty}  \dfrac{ g ( \log t)/t    }{t-z}  \ d t   = \exp{\ {
    {{\int\nolimits_{[ 0 , 1\, ]}}} \, \log \frac{1}{1 - t z} \ d \mu (t)
    }} \ ,  \ \ z \in \Lambda \ .
 \end{gather}
\end{lemma}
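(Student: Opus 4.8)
The plan is to reduce \eqref{3.2} to a single application of Lemma~A. First I would substitute $t=1/s$ in the integral on the left of \eqref{3.2} and extend $g$ to $[0,\infty)$ by $g(0):=0$, consistent with hypothesis~(a); writing $\Phi(s):=g(-\log s)$ for $s\in(0,1]$, a short computation rewrites the left-hand side as the Cauchy transform
\begin{gather*}
F(z):=\int_{1}^{\infty}\frac{g(\log t)/t}{t-z}\,dt=\int_{0}^{1}\frac{\Phi(s)}{1-sz}\,ds\ ,\qquad z\in\Lambda\ ,
\end{gather*}
which is analytic on $\Lambda$ since $1-sz\neq0$ for $s\in(0,1]$, $z\in\Lambda$, and which is zero-free on $\Lambda$ because its imaginary part has the sign of $\im z$ off the real axis and it is positive on $(-\infty,1)$. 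By hypothesis~(b) the substitution $s=e^{-x}$ gives $\int_{0}^{1}\Phi(s)\,ds=\int_{0}^{\infty}e^{-x}g(x)\,dx=1$, so $F(0)=1$. Since \eqref{3.2} is the statement that $F$ admits a representation of the form on the right-hand side of \eqref{8}, and $F(0)=1$, it is equivalent to exhibit a probability measure $\mu$ on $[0,1]$ with $F'(z)/F(z)=\int_{[0,1]}t\,(1-tz)^{-1}\,d\mu(t)$; multiplying by $z$, adding $1$ and using $\mu([0,1])=1$, this is in turn equivalent to
\begin{gather*}
1+\frac{zF'(z)}{F(z)}=\int_{[0,1]}\frac{d\mu(t)}{1-tz}\ ,\qquad z\in\Lambda\ .
\end{gather*}

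The second step is to cast the left-hand side of the last display into the ratio form of Lemma~A. Differentiating under the integral sign gives $F(z)+zF'(z)=\int_{0}^{1}\Phi(s)(1-sz)^{-2}\,ds$, and the square in the denominator is removed by integration by parts through the identity $\partial_{s}\!\big(s/(1-sz)\big)=(1-sz)^{-2}$. The boundary term at $s=1$ equals $\Phi(1)/(1-z)=g(0)/(1-z)=0$ by~(a), and the one at $s=0$ vanishes too, since $\Phi$ is positive, non-increasing (as $g'\geq0$ by~(c)) and integrable, which forces $s\Phi(s)\to0$ as $s\downarrow0$. Using $\Phi'(s)=-g'(-\log s)/s$, the integration by parts yields
\begin{gather*}
1+\frac{zF'(z)}{F(z)}=\int_{0}^{1}\frac{\varphi(s)}{1-sz}\,ds\,\Big/\int_{0}^{1}\frac{\psi(s)}{1-sz}\,ds\ ,\qquad \varphi(s):=g'(-\log s)\ ,\quad\psi(s):=g(-\log s)\ .
\end{gather*}

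Next I would verify the hypotheses \eqref{lem01} of Lemma~A for this pair. Both $\varphi,\psi$ are nonnegative on $(0,1)$ and $\int_{0}^{1}\psi=1$ as above; for $\varphi$ the substitution $s=e^{-x}$ and one integration by parts give $\int_{0}^{1}\varphi=\int_{0}^{\infty}e^{-x}g'(x)\,dx=\big[g(x)e^{-x}\big]_{0}^{\infty}+\int_{0}^{\infty}e^{-x}g(x)\,dx=1$, the boundary term vanishing by~(a) and by the same monotonicity-and-integrability argument that gives $g(x)e^{-x}\to0$. Thus $\int_{0}^{1}\varphi=\int_{0}^{1}\psi=1>0$ and both are integrable on $[0,1]$. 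Finally, since $\psi>0$ on $(0,1)$, the determinant condition in \eqref{lem01} amounts to $\varphi(x)/\psi(x)=g'(-\log x)/g(-\log x)$ being non-decreasing on $(0,1)$, i.e., with $x=e^{-v}$, to $v\mapsto g'(v)/g(v)$ being non-increasing on $(0,\infty)$, that is $(g'/g)'\leq0$ there, which is precisely hypothesis~(d). Hence Lemma~A applies and delivers a probability measure $\mu$ on $[0,1]$ with $\int_{[0,1]}d\mu(t)/(1-tz)=1+zF'(z)/F(z)$.

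It remains to integrate. Dividing by $z$, using the partial fractions $1/\big(z(1-tz)\big)=1/z+t/(1-tz)$ and $\mu([0,1])=1$ gives $F'(z)/F(z)=\int_{[0,1]}t\,(1-tz)^{-1}\,d\mu(t)$; integrating from $0$ to $z$ (Fubini, together with $\int_{0}^{z}t\,(1-tw)^{-1}\,dw=\log\frac{1}{1-tz}$, the segment $[0,z]$ lying in $\Lambda$) and using $F(0)=1$ yields $F(z)=\exp\!\big(\int_{[0,1]}\log\frac{1}{1-tz}\,d\mu(t)\big)$, which is \eqref{3.2}. I expect the only genuine work to be the integration-by-parts step and the accompanying boundary analysis at $s=0$ and $s=1$; the point of the lemma is precisely that hypotheses~(a)--(d) are exactly what is needed for those boundary terms to vanish and for $(\varphi,\psi)$ to be admissible in Lemma~A, everything after that being routine manipulation of Cauchy transforms.
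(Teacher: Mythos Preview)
Your proof is correct and follows essentially the same approach as the paper's: both compute $zf'(z)/f(z)$ (equivalently your $1+zF'(z)/F(z)$, since $f(z)=zF(z)$) via integration by parts, arrive at the identical pair $\varphi(s)=g'(-\log s)$, $\psi(s)=g(-\log s)$, verify the hypotheses of Lemma~A, and then integrate the resulting logarithmic derivative. Your version performs the substitution $t=1/s$ at the outset rather than after differentiating, and is somewhat more explicit about the vanishing of the boundary term at $s\to0^+$ (equivalently $g(x)e^{-x}\to0$), but the route is the same.
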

\begin{proof}
   Denote $v (x) :=  g ( \log x)/x$, $x > 1$, and for $z \in \Lambda$ let
\begin{gather*}
    f (z) := z  \int_{1}^{\infty}  \dfrac{ g ( \log t)/t    }{t-z}  \ d t  =  z \int_{1}^{\infty} \frac{v (t)  \ d t }{t-z}  = \int_{1}^{\infty} \left[
-1 + \frac{t}{t-z}\right] v (t) \ d t  \ .
\end{gather*}
The properties \eqref{3.1}(a),(b) mean that $v \in L_1 ([1, +\infty))$
and $\lim_{x\to 1+0}v (x) = 0$, which implies that
for arbitrary $z \in \Lambda$ we have
\begin{gather*}
f^{\, \prime } (z) =  \int_{1}^{\infty} \frac{t v (t) }{(t-z)^{2}} d t = -  \int_{1}^{\infty}t v (t) d \frac{1}{t-z} =
\frac{v (1)}{1-z} +  \int_{1}^{\infty} \frac{\left(t v (t)\right)^{\, \prime } }{t-z} d t = \int_{1}^{\infty} \frac{\left(t v (t)\right)^{\, \prime } }{t-z} d t \ ,
\end{gather*}
and
\begin{gather*}
    \frac{z f^{\, \prime } (z)}{ f (z)} = \frac{\int\limits_{1}^{\infty} \dfrac{\left(t v (t)\right)^{\, \prime } }{t-z} d t}{\int\limits_{1}^{\infty} \dfrac{v (t)}{t-z}  \ d t} = \frac{\int\limits_{0}^{1} \dfrac{\left[(1/t) v^{\, \prime }(1/t) + v (1/t)\right]/t }{1-t z} d t}{\int\limits_{0}^{1}
\dfrac{v (1/t)/t}{1-t z}  \ d t} =
\frac{\int\limits_{0}^{1} \dfrac{\varphi (t) }{1-t z} d t}{\int\limits_{0}^{1} \dfrac{\psi (t)}{1-t z}  \ d t} \ \ ,
\end{gather*}
where
\begin{gather*}
 \varphi (x) := \left((1/x) v^{\, \prime }(1/x) + v (1/x)\right)/x \ , \ \
\psi (x) :=  v (1/x)/x \ , \  \ 0 < x < 1 \ ,
\end{gather*}
and
\begin{gather*}
   \int_{0}^{1} \varphi (t) \ d t = \int_{0}^{1} \psi (t) \ d t = \int_{1}^{\infty} \frac{v (x)}{x} d x =
\int_{1}^{\infty} \frac{g (\log x)}{x^{2}} d x =  \int_{0}^{\infty} \frac{g ( x)}{e^{x}} d x = 1 \ .
\end{gather*}
Moreover, it follows from \eqref{3.1}(d) that the function $g^{\, \prime }(\log x) /  g (\log x)$ is non-increasing on $(1, +\infty)$ and since
\begin{gather*}
   1 + \frac{x v^{\, \prime }(x)}{ v (x)} = \frac{\dfrac{d}{d x} \left[x v(x)\right] }{ v (x)} =
  \frac{ \dfrac{d}{d x } \left[x \cdot \dfrac{g ( \log x )}{x}\right]}{ \dfrac{g ( \log x )}{x} } \ \ = \frac{\dfrac{g^{\, \prime }( \log x )}{x} }{ \dfrac{g ( \log x )}{x}} = \frac{g^{\, \prime }( \log x )}{g ( \log x )}
\end{gather*}
the function $x v^{\, \prime }(x) /  v (x)$ also does not increase on $(1, +\infty)$. This means that
for arbitrary $ 0 < x_{1} \leq  x_{2} < 1 $ we have
\begin{gather*}
    0 \leq \frac{1}{x_1 x_2} \begin{vmatrix}
 v (1/x_1)   & (1/x_1) v^{\, \prime }(1/x_1) \\
 v (1/x_2)  & (1/x_2) v^{\, \prime }(1/x_2)   \\
\end{vmatrix} = \begin{vmatrix}
 v (1/x_1)/x_1   & (1/x_1^{2}) v^{\, \prime }(1/x_1) \\
 v (1/x_2)/x_2  & (1/x_2^{2}) v^{\, \prime }(1/x_2)   \\
\end{vmatrix} = \begin{vmatrix}
 \psi (x_1 ) &\varphi (x_1)\\
  \psi (x_2 )  & \varphi(x_2)  \\
\end{vmatrix} \ .
\end{gather*}
Lemma A guarantees the  existence of a probability measure $\mu$ on $[0,1]$ such that
\begin{gather*}
 \frac{z f^{\, \prime } (z)}{ f (z)} = \int\nolimits_{[ 0 , 1\, ]} \frac{ d \mu (t)}{1 - t z}\ ,  \ \ z \in \Lambda \ .
\end{gather*}
Since
\begin{gather*}
    \frac{\dfrac{d}{d z} \dfrac{f (z)}{z}}{ \dfrac{f (z)}{z}} = \frac{ \dfrac{zf^{\, \prime } (z) - f (z)}{z^{2}}}{\dfrac{f (z)}{z}} = \frac{ f^{\, \prime } (z)}{ f (z)} - \frac{1}{z} = \int\nolimits_{[ 0 , 1\, ]} \frac{d}{d z} \log \frac{1}{1- t z} \ d \mu (t)
\end{gather*}
we can integrate this equality from $0$ to $z \in \Lambda$ and  obtain
\begin{gather*}
    \log  \frac{f (z)}{z} -  \log  f^{\, \prime } (0) = \int\nolimits_{[ 0 , 1\, ]} \log \frac{1}{1 - t z} \ d \mu (t) \ ,
\end{gather*}
where $f^{\, \prime } (0) = 1$ by virtue of \eqref{3.1}(b). Lemma~\ref{lemma1} is proved.
\end{proof}

\subsection{The proof}\label{s4}

\vspace{0.15cm}
By Theorem \ref{xthbr} the statement of  Theorem~\ref{th3} for $\alpha =1$ means that for $\sigma_1$
there exists a probability measure   $\mu$ on $[0, 1]$ such that
  \begin{gather}\label{fth31}
    \dfrac{\sigma_{1}(z)}{z} =
        \exp{\ {
    {{\int\nolimits_{[ 0 , 1\, ]}}} \,  \log \frac{1}{1 - t z} \ d \mu (t)
    }} \ , \ \ z \in \Lambda \ ,
  \end{gather}
where in accordance with  \eqref{2.27},
\begin{gather*}
 2 \left(\theta_{1}- 1/3\right) \frac{\sigma_{1} (z)}{z}   =
  \int_{1}^{\infty}\frac{1}{t - z }
\frac{ g \left( \log t \right)}{t}  d t \ ,  \ \ z \in \Lambda \ \ , \\
 g (x) := \frac{4}{3}-  U^{\, \prime } \left( x\right) -  u^{\, \prime } \left( x\right)  \ , \ \ x > 0 \ ,
\end{gather*}
and in view of \eqref{2.22}
\begin{gather*}
 \int_{0}^{\infty} e^{-x} g (x) d x  =
 \int_{0}^{\infty} e^{-x} \left[4/3 -U^{\, \prime } \left( x\right) -  u^{\, \prime } \left( x\right)\right] \ d x =2 \left(\theta_{1}- 1/3\right) \ .
\end{gather*}
Moreover, Theorem~\ref{thm4} and \eqref{2.17} imply that for arbitrary $x > 0$
\begin{align*}
 & g (x)  =  \frac{4}{3} -  U^{\, \prime } \left( x\right) -  u^{\, \prime } \left( x\right) \in \left[ 0 \ , \  \frac{1}{3}  \right] \ ,  \ \
     & g^{\, \prime} (x)   = -  U^{\, \prime \prime  } \left( x\right) -  u^{\, \prime \prime  } \left( x\right) > 0 \ , \\ &  g^{\, \prime \prime} (x) = -  U^{\, \prime \prime \prime   } \left( x\right) -  u^{\, \prime \prime \prime   } \left( x\right) < 0  \ , \ \  &  g (0)   =  \frac{4}{3} -  U^{\, \prime } \left(0\right) -  u^{\, \prime } \left(0\right) = 0 \   .
\end{align*}
Thus,
\begin{gather*}
 g^{\, \prime } (  x )^{2} - g^{\, \prime \prime} (  x )  g (  x ) >  0 \ , \ x > 0 \ ,
 \end{gather*}
and Lemma~\ref{lemma1} yields the validity of \eqref{fth31}.


\newpage

\section{\texorpdfstring{Proof of Theorem \ref{thm4}}{Proof of Theorem 3.1}}\label{sect5}

\subsection{An algorithm}\label{sect51}

In this section we present a general algorithm which gives sufficient
conditions for inequalities of the type described in Theorem
\ref{thm4}. It is dealing with exponential polynomials on $\Bb{R}^+$.

\begin{definition}\label{def2}
 Let
$$ f(x):=\sum_{k=0}^{m} P_k(x) e^{k x},$$
where the $P_k$ are real polynomials of exact degree $n_k$. Then we call $f$ an
{\em exponential polynomial } of order $m$ and (multi-)degree
  $\{n_0,\dots,n_m\}$.
\end{definition}

\begin{remark}
  A polynomial $P(x)=\sum_{j=0}^{n}a_j x^j$ is said to be of exact
  degree $n$ if $a_n\neq0$. If $P\equiv0$ then we say it is of
  (exact) degree $-1$.
\end{remark}

\begin{thm}
 \label{thm5}
 Let
$$ f(x):=f_0(x)=\sum_{k=0}^{m} P_k(x) e^{k x}=\sum_{k=0}^{\infty}a_k x^k
$$
be an exponential polynomial of order $m$ and degree
$\{n_0,\dots,n_m\}$. Let
\begin{equation}
  \label{eq:6}
f_{k+1}(x):=f_k^{(n_k+1)}(x)e^{-x},\quad k=0,\dots,m-1,
\end{equation}
and assume that
\begin{equation}
  \label{eq:7}
f_k^{(s)}(0)\geq0,\quad s=0,\dots,n_k,\ k=0,\dots,m.
\end{equation}
Then all Taylor coefficients $a_k,\ k\geq 0,$ of $f(x)$ are non-negative. In
particular,
$f(x)\geq0,\ x\geq 0$.
\end{thm}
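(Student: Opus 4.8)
The plan is to argue by induction on the order $m$ of the exponential polynomial, reducing the problem one layer at a time via the operator $f\mapsto f^{(n+1)}e^{-x}$ that appears in \eqref{eq:6}. For $m=0$ the function $f=P_0$ is just a real polynomial of exact degree $n_0$, and the hypothesis \eqref{eq:7} says precisely that its Taylor coefficients $a_0,\dots,a_{n_0}$ at the origin are non-negative; since all higher coefficients vanish, the conclusion is immediate, and $f(x)\geq0$ for $x\geq0$ follows. So suppose the statement holds for exponential polynomials of order $m-1$, and let $f=f_0=\sum_{k=0}^m P_k(x)e^{kx}$ have order $m$ and degree $\{n_0,\dots,n_m\}$.

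The key observation is that differentiating $f_0$ exactly $n_0+1$ times kills the polynomial summand $P_0(x)$ (which has degree $n_0$), and that multiplying the result by $e^{-x}$ lowers every exponential $e^{kx}$ to $e^{(k-1)x}$. Concretely, by the Leibniz rule $\bigl(P_k(x)e^{kx}\bigr)^{(n_0+1)}=Q_k(x)e^{kx}$ with $Q_k$ a real polynomial of the same exact degree $n_k$ as $P_k$ (the leading term is $k^{n_0+1}$ times that of $P_k$, and $k\geq1$), so
\[
f_1(x)=f_0^{(n_0+1)}(x)e^{-x}=\sum_{k=1}^{m}Q_k(x)e^{(k-1)x}=\sum_{j=0}^{m-1}Q_{j+1}(x)e^{jx}
\]
is an exponential polynomial of order $m-1$ and degree $\{n_1,\dots,n_m\}$. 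The hypotheses \eqref{eq:7} for the indices $k=1,\dots,m$ of $f$ are, upon inspection of this construction, exactly the hypotheses \eqref{eq:7} for $f_1$ (with its own derived functions $f_2,\dots,f_m$, which by \eqref{eq:6} coincide with those already attached to $f$). Hence the induction hypothesis applies to $f_1$ and gives $f_1(x)\geq0$ for all $x\geq0$, i.e. $f_0^{(n_0+1)}(x)\geq0$ for $x\geq0$.

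It remains to climb back up from $f_0^{(n_0+1)}\geq0$ to the non-negativity of the Taylor coefficients of $f_0$ itself. Here I would use the elementary fact that if $h$ is smooth on $[0,\infty)$ with $h(x)\geq0$ there, and $h(0),h'(0),\dots,h^{(r-1)}(0)\geq0$, then the antiderivative tower forces all Taylor coefficients of a primitive of order $r$ of $h$ to be non-negative: integrating $h\geq0$ repeatedly from $0$ and adding the non-negative initial data $f_0^{(s)}(0)$ for $s=0,\dots,n_0$ (supplied by the remaining hypotheses \eqref{eq:7}, namely those with $k=0$) yields $f_0^{(s)}(0)\geq0$ for every $s\geq0$, which is the same as $a_s=f_0^{(s)}(0)/s!\geq0$ for all $s\geq0$. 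In particular $f(x)=\sum a_s x^s\geq0$ for $x\geq0$. The main thing to get right — and the only place where a genuine (if routine) verification is needed — is the bookkeeping in the middle paragraph: checking that the exact degrees are preserved under $P_k(x)e^{kx}\mapsto\bigl(P_k(x)e^{kx}\bigr)^{(n_0+1)}e^{-x}$ for $k\geq1$, and that the derived sequence $f_1,f_2,\dots$ built from $f_0$ in \eqref{eq:6} really is the sequence that the induction hypothesis wants to see for $f_1$. Everything else is a one-line integration argument.
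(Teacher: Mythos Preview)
Your induction scheme matches the paper's, but the ``climb back up'' step contains a genuine error. From the induction hypothesis you extract only the pointwise conclusion $f_1(x)\geq0$ on $[0,\infty)$, hence $f_0^{(n_0+1)}(x)\geq0$ there, and then you assert the ``elementary fact'' that a non-negative smooth function $h$ on $[0,\infty)$, together with non-negative initial data for its repeated antiderivatives, forces all Taylor coefficients of those antiderivatives to be non-negative. This is false: take $h(x)=(x-1)^2\geq0$; its antiderivative $H(x)=x-x^2+x^3/3$ satisfies $H(0)=0\geq0$ but has a negative coefficient. Repeated integration from $0$ propagates \emph{pointwise} non-negativity (so your argument does correctly yield $f_0(x)\geq0$ for $x\geq0$), but it says nothing about the signs of $f_0^{(s)}(0)$ for $s>n_0$, which are the derivatives of $f_0^{(n_0+1)}$ at $0$ and are not controlled by $f_0^{(n_0+1)}\geq0$.

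The fix is simply not to throw away information: the induction hypothesis applied to $f_1$ gives that \emph{all} Taylor coefficients of $f_1$ are non-negative, not merely $f_1\geq0$. Then $f_0^{(n_0+1)}(x)=e^x f_1(x)$ is a Cauchy product of two power series with non-negative coefficients, so all of its Taylor coefficients are non-negative; this yields $a_s\geq0$ for $s\geq n_0+1$, and the remaining $a_0,\dots,a_{n_0}$ are covered by the $k=0$ hypotheses in \eqref{eq:7}. This is exactly how the paper closes the induction.
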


\begin{remark}
 Note that there are only {\em finitely} many, namely
$$\mu(f):=\sum_{k=0}^{m}(n_k+1),$$ conditions
to be tested (which involve only the first $\mu(f)$
coefficients $a_k$ of $f$) to draw the conclusion for {\em all }
coefficients of $f$.
\end{remark}

\begin{proof}
The proof runs by mathematical induction. First note that for any
exponential polynomial
$$
h(x)=\sum_{k=0}^{m}P_k(x)e^{kx}, \quad \mbox{degree}(h)=
\{n_0,\dots,n_m\},
$$
we have
$$
h'(x)= P_0'(x)+\sum_{k=1}^{m}(P_k'(x)+k P_k(x))e^{kx},
$$
which is an exponential polynomial of order $m$ and degree
$\{n_0-1,n_1,\dots,n_m\}$. Further, if  $n_0=0$,
the function $h'(x) e^{-x}$ is an exponential polynomial of order
$m-1$ and degree $\{n_1,\dots,n_m\}$.

We begin with the case $m=0$. Then we have $f=P_0$ with degree
$\{n_0\}$. In this case the conditions \eqref{eq:7} just say
$$P_0^{(s)}(0)\geq0,\quad s=0,\dots,n_0,
$$
which means that all coefficients of $P_0$ (and therefore of $f$) are
non-negative. This settles the case $m=0$.

Now assume that the theorem is valid for some $m-1\geq0$, and let $f=f_0$
be as in the statement of the theorem. The way the function $f_1$ is
defined
it is clear that it is an exponential polynomial of order $m-1$, and
degree $\{n_1,\dots,n_m\}$ and the conditions \eqref{eq:7}, applied to
$f_1$ instead of $f_0$, show, by our assumption that the theorem is
correct for functions of degree $m-1$, that $f_1$ has all of it's
Taylor coefficients non-negative, which implies that the Taylor
coefficients of
\begin{equation}
  \label{eq:8}
f_0^{(n_0+1)}(x)=e^x f_1(x)
\end{equation}
are also all non-negative. The conditions \eqref{eq:7} concerning $f_0$
now say  that the remaining first coefficients of $f$, namely
$a_s =f_0^{(s)}(0)/s!$, \ $s=0,\dots,n_0$, are non-negative as well. This
completes the proof.\end{proof}

When it comes to the application of this theorem we have to keep the
following facts in mind:
\begin{enumerate}
\item This algorithm is particularly suited for cases when the
  polynomials $P_k$ have rational coefficients only since then all the
  numbers $f^{(s)}_k(0)$ are rational and therefore their calculation
  via a computer algebra program is exact and no numerical problem, f.i. with
  cancelation, occurs.
\item Given an exponential polynomial it is not really necessary to
  know its order or multi-degree to begin with: the algorithm can
  decide, when properly implemented, by itself what to do next
  (differentiate once more or go to the next $f_k$. In particular it
  can stop as soon as one of the numbers  $f^{(s)}_k(0)$ turns out to
  be negative, which can save machine time.
\end{enumerate}

\subsection{The proof}\label{thep}

The proof of Theorem \ref{thm4} will be completely computer based,
using the algorithm just described. All coefficients in these cases are
rational, actually integers, so there is no numerical problem. The algorithm has been
programmed using Mathematica version 9.0 and run on a laptop
computer. Computation time was a few seconds for each of the three
cases to be verified for Theorem \ref{thm4}.

The resulting numbers  $f^{(s)}_k(0)$ are collected in one single vector
$\lambda(f)$ with
$\mu(f)$ entries, listed in their natural order as they are being
calculated by the algorithm. If $\lambda(f)$ turns out to be
non-negative then the case under
consideration has been settled.

\vspace{6mm}
{\bf 6.2.1. \  Case 1:  ${\mathbf{ (U'+u')+(U''+u'')>0}}$.}\

\vspace{0.25cm}
Using \eqref{2.12}, \eqref{2.13} we find for
\begin{align*}
R_1(x)&:=\frac{H(x)}{H(x)-1}+\frac{h(x)}{h(x)-1}-\frac{H(x)}{(H(x)-1)^3}-
\frac{h(x)}{(h(x)-1)^3}\\
&=\frac{x^2}{((e^x-1-x)^3 (1-e^x(1-x))^3}S_1(x),
\end{align*}
where
\begin{align*}
S_1(x)= & \ (-2-x)
+(8-3x-3x^2)e^x+(-14+9x-6x^2-5x^3)e^{2x}\\
&+(16+18x^2-2x^4)e^{3x}
+(-14-9x-6x^2+5x^3)e^{4x}\\
&+(8+3x-3x^2)e^{5x}
+(-2+x)e^{6x}.
\end{align*}

\noindent
So $S_1$ is an exponential polynomial of order 6 and degree
$\{1,2,3,4,3,2,1\}$.
Application of the algorithm produces the vector
\begin{eqnarray*}
\lambda(S_1)&=&
(0,0,0,0,0,0,0,0,0,0,72240,1155840,9557760,56267040,
\\ & &271084224,880843680, 2475629568,6343909632,
\\  & & 1533939393792,20392197120,25057382400,29561241600,\\ & & 4478976000) \ ,
\end{eqnarray*}
which proves that the desired inequality is valid.

\vspace{6mm}

{\bf 6.2.2. \  Case 2: ${\mathbf{ -(U''+u'')-(U'''+u''')>0}}$.} \

\vspace{0.25cm}
Here we have to show that

\begin{align*}
R_2(x)&:=\frac{H(x)}{(H(x)-1)^3}+\frac{h(x)}{(h(x)-1)^3}- H(x)\frac{1+2H(x)}{(H(x)-1)^5}-h(x)\frac{1+2h(x)}{(h(x)-1)^5}\\
&=\frac{x^2(e^x-1)^2}{((e^x-1-x)^5 (1-e^x(1-x))^5}S_2(x),
\end{align*}
where
\begin{align*}
S_2(x)= & \  (-4-x)
+(24-15x-5x^2)e^x
+(-64+70x-60x^2-50x^3-20x^4-4x^5)e^{2x}\\
&+(104-91x+285x^2+100x^3+20x^4-x^5-x^6)e^{3x}
+(-120-440x^2)e^{4x}\\
&+(104+91x+285x^2-100x^3+20x^4+x^5-x^6)e^{5x}
\\&+(-64-70x-60x^2+50x^3-20x^4+4x^5)e^{6x}
+(24+15x-5x^2)e^{7x}
+(-4+x)e^{8x}.
\end{align*}

So $S_2$ is an exponential polynomial of order 8 and degree
$\{1,2,5,6,2,6,5,2,1\}$.
Application of the algorithm produces the vector
\begin{eqnarray*}
\lambda(S_2)&=&
(0,0,0,0,0,0,0,0,0,0,0,0,0,0,0,0,1095494400,38342304000,\\ & & 718413696000, 8922167654400,
85789518796800,686634000998400,\\ & & 4108040955648000, 21277519458048000,98491821821245440,
\\ & & 417993857883463680,1659729058910208000, 6264125727645450240,\\ & & 22744955668622376960,
57435249160046592000, 138673044884876820480,
\\ & & 324272107555238707200, 741041088684097536000, 1665009811944898560000,
 \\ & & 3693054970331136000000,4415481367363584000000, 5133351192625152000000,
\\ & & 5850215720681472000000, 716770887598080000000) 
\end{eqnarray*}
which proves the claim as all entries are non-negative.

\vspace{6mm}
{\bf 6.2.3. \ Case 3: ${\mathbf{ (U'''+u''')>}}0$.}\

\vspace{0.25cm}
Here we have to show that

\begin{align*}
R_3(x):=
H(x)\frac{1+2H(x)}{(H(x)-1)^5}+h(x)\frac{1+2h(x)}{(h(x)-1)^5}
=\frac{x(e^x-1)^3}{((e^x-1-x)^5 (1-e^x(1-x))^5}S_3(x)>0,
\end{align*}
where
\begin{align*}
  S_3(x):=& \ 1-2x+(-5+20x+10x^3+5x^4+x^5)e^x
\\&+
(9-72x-70x^3-30x^4-11x^5-2x^6)e^{2x}+(-5+130x+160x^3+25x^4-10x^5)e^{3x}
\\ &+(-5-130x-160x^3+25x^4-10x^5)e^{4x}
+(9+72x+70x^3-30x^4+11x^5-2x^6)e^{5x}\\&+
(-5-20x-10x^3+5x^4-x^5)e^{6x}
+(1+2x)e^{7x}.
\end{align*}

So $S_3$ is an exponential polynomial of order 7
 and degree
$\{1,5,6,5,5,6,5,1\}$.
Application of the algorithm produces the vector
\begin{eqnarray*}
\lambda(S_3) &= &
(0,0,0,0,0,0,0,0,0,0,0,0,0,0,0,0,115315200,3863059200,70457587200,\\
& & 927826099200, 9830767564800,8631514316800,615374090956800,\\
& & 83729093713049600,20168695176376320,99183876729477120,450524284521338880,\\
& & 1915432618475059200,5792081300977213440,16127157987099279360, \\
& & 41953781738132766720, 103330763975294484480, 243753521061983846400,   \\
& &556095351762151833600,1236678576792676761600,  1461058224846520320000,\\
& &  1642128742165708800000,1795208480980992000000, 1936007205617664000000,\\
& & 2073220384555008000000, 2209799770472448000000, 136527788113920000000)
\end{eqnarray*}
which is also non-negative. The proof is complete.



\affiliationone{Andrew  Bakan\\
  Institute of Mathematics\\
   National Academy of
Sciences of Ukraine\\
01601 Kyiv \\
   Ukraine
   \email{andrew@bakan.kiev.ua} }
\affiliationtwo{Stephan Ruscheweyh\\
   Institut f\"ur Mathematik \\
    Universit\"at W\"urzburg \\
97074 W\"urzburg, Germany
\email{ruscheweyh@mathematik.uni-wuerzburg.de}}
\affiliationthree{Luis Salinas \\
Departamento de Inform\'atica, UTFSM \\
 Valpara\'\i{}so, Chile
 \email{luis.salinas@usm.cl}}
\end{document}